\newtheorem{theorem}{Theorem}
\newtheorem{lemma}[theorem]{Lemma}
\newtheorem{corollary}[theorem]{Corollary}
\theoremstyle{remark}
\newtheorem*{remark}{Remark}
\numberwithin{equation}{section}
\newcommand{\lk}{\left(}
\newcommand{\rk}{\right)}
\newcommand{\R}{\mathbb{R}}
\newcommand{\La}{\Lambda}
\newcommand{\tr}{\textnormal{Tr}}
\title{Geometrical Versions of improved Berezin-Li-Yau Inequalities}
\author{Leander Geisinger}
\address{Leander Geisinger \\ Universität Stuttgart \\ Pfaffenwaldring 57 \\ D - 70569 Stuttgart}
\email{geisinger@mathematik.uni-stuttgart.de}
\author{Ari Laptev}
\address{Ari Laptev \\ Imperial College London \\ 180 Queen's Gate \\ London SW7 2AZ \\ UK }
\email{a.laptev@imperial.ac.uk}
\author{Timo Weidl} 
\address{Timo Weidl \\Universität Stuttgart \\ Pfaffenwaldring 57 \\ D - 70569 Stuttgart}
\email{weidl@mathematik.uni-stuttgart.de}
\subjclass[2000]{Primary 35P15; Secondary 47A75}
\keywords{}
\begin{document}

\begin{abstract}
We study the eigenvalues of the Dirichlet Laplace operator on an arbitrary bounded, open set in $\R^d$, $d \geq 2$. In particular, we derive upper bounds on Riesz means of order $\sigma \geq 3/2$, that improve the sharp Berezin inequality by a negative second term. This remainder term depends on geometric properties of the boundary of the set and reflects the correct order of growth in the semi-classical limit.

Under certain geometric conditions these results imply new lower bounds on individual eigenvalues, which improve the Li-Yau inequality.
\end{abstract}

\maketitle

%%%%%%%%%%%%%%%%%%%%%%%%%%%%%%%%%%%%%%%%%%%%%%%%%%%%%%%%%%%%%%%%

\section{Introduction}
\label{sec:intro}

Let $\Omega \subset \R^d$ be an open set and let $-\Delta$ denote the Dirichlet Laplace operator on $L^2(\Omega)$, defined as a self-adjoint operator with form domain $H^1_0(\Omega)$. We assume that the volume of $\Omega$, denoted by $|\Omega|$, is finite. Then the embedding $H_0^1(\Omega) \hookrightarrow L^2(\Omega)$ is compact and the spectrum of $-\Delta$ is discrete: It consists of positive eigenvalues 
$$
0\, < \, \lambda_1 \, \leq \, \lambda_2 \, \leq \, \lambda_3 \, \leq \, \dots
$$
accumulating at infinity only.

Here we are interested in upper bounds on the Riesz means 
$$
\sum_k(\La - \lambda_k)_+^\sigma \, = \, \tr \lk -\Delta - \La \rk_-^\sigma \, , \quad \sigma \geq 0 \, ,
$$
where we use the notation $x_\pm = (|x|\pm x) /2$.
In 1972 Berezin proved that, convex eigenvalue means are bounded uniformly by the corresponding phase-space volume, see \cite{Berezi72}: For any open set $\Omega \subset \R^d$, $\sigma \geq 1$, and all $\La >0$ 
\begin{equation}
\label{in:beliyau}
\tr \lk - \Delta - \Lambda \rk_-^\sigma \, \leq \, \frac{1}{(2\pi)^d} \iint_{\Omega \times \R^d} \lk |p|^2 - \La \rk^\sigma_+ dp \, dx \, =  \, L^{cl}_{\sigma,d} \, |\Omega| \, \La^{\sigma+d/2}  \, ,
\end{equation}
see also \cite{LiYau83}, where the problem is treated from a different point of view. Here $L^{cl}_{\sigma,d}$ denotes the so-called Lieb-Thirring constant
$$
L^{cl}_{\sigma,d} \, = \, \frac{\Gamma(\sigma+1)}{(4 \pi)^{d/2} \, \Gamma(\sigma+1+ d/2)} \, . 
$$

The Berezin inequality (\ref{in:beliyau}) captures, in particular, the well-known asymptotic limit that goes back to Hermann Weyl \cite{Weyl12}: For $\Omega \subset \R^d$ and $\sigma \geq 0$ the asymptotic identity
\begin{equation}
\label{eq:as_basic}
\tr (-\Delta-\Lambda)_-^\sigma \, = \, L^{cl}_{\sigma,d} \, |\Omega| \, \La^{\sigma + d/2} + o \lk \La^{\sigma+d/2} \rk 
\end{equation}
holds true as $\La \to \infty$. From this follows, that the Berezin inequality is sharp, in the sense that the constant in (\ref{in:beliyau}) cannot be improved. However, Hermann Weyl's work stimulated further analysis of the asymptotic formula and (\ref{eq:as_basic}) was gradually improved by studying the second term, see \cite{CouHil24,Hoerma68,Ivrii80,Melros80,SafVas97,Ivrii98} and references within. The precise second term was found by Ivrii \cite{Ivrii80}: Under appropriate conditions on the set $\Omega$ and its boundary $\partial \Omega$ the relation
\begin{equation}
\label{eq:asymptotics}
\tr (-\Delta-\Lambda)_-^\sigma \, = \, L^{cl}_{\sigma,d} \, |\Omega| \, \La^{\sigma + d/2} - \frac 14 \, L^{cl}_{\sigma,d-1} \, |\partial \Omega| \, \La^{\sigma+(d-1)/2} + o \lk \La^{\sigma+(d-1)/2} \rk 
\end{equation}
holds as $\La \to \infty$. To simplify notation we write $|\Omega|$ for the volume (the $d$-dimensional Lebesgue measure) of $\Omega$, as well as $|\partial \Omega|$ for the $d-1$-dimensional surface area of its boundary. Since the second term of this semi-classical limit is negative, the question arises, whether the Berezin inequality (\ref{in:beliyau}) can be improved by a negative remainder term.

Recently, several such improvements have been found, initially for the discrete Laplace operator, see \cite{FrLiUe02}. The first result for the continuous Laplace operator is due to Melas \cite{Melas03}. From his work follows that 
\begin{equation}
\label{eq:melas}
\tr \lk -\Delta - \La \rk^\sigma_- \, \leq \, L^{cl}_{\sigma,d} \, |\Omega|   \lk  \La - M_d \frac{|\Omega|}{I(\Omega)} \rk_+^{\sigma+d/2} \, , \quad \La > 0 \, , \quad \sigma \geq 1 \, ,
\end{equation}
where $M_d$ is a constant depending only on the dimension and $I(\Omega)$ denotes the second moment of $\Omega$,
see also \cite{Ilyin09,Yolcu09} for further generalisations. One should mention, however, that these corrections do not capture the correct order in $\La$ from the second term of the asymptotics (\ref{eq:asymptotics}). This was improved in the two-dimensional case in \cite{KoVuWe08}, where it is shown that one can choose the order of the correction term arbitrarily close to the correct one.

In this paper we are interested in the case $\sigma \geq 3/2$. 
For these values of $\sigma$ it is known, \cite{Weidl08}, that one can strengthen the Berezin inequality for any open set $\Omega \subset \R^d$ with a negative remainder term reflecting the correct order in $\La$ in comparison to the second term of (\ref{eq:asymptotics}). However, since one can increase $|\partial \Omega|$ without changing the individual eigenvalues $\lambda_k$ significantly, a direct analog of the first two terms of the asymptotics (\ref{eq:asymptotics}) cannot yield a uniform bound on the eigenvalue means. Therefore - without further conditions on $\Omega$ -  any uniform improvement of (\ref{in:beliyau}) must invoke other geometric quantities. 

In the result from \cite{Weidl08} the remainder term involves certain projections on $d-1$-dimensional hyperplanes. In \cite{GeiWei10} a universal improvement of (\ref{in:beliyau}) was found, that holds for $\sigma \geq 3/2$ with a correction term of correct order, depending only on the volume of $\Omega$. 

The proof of the aforementioned results relies on operator-valued Lieb-Thirring inequalities \cite{LapWei00} and an inductive argument, that allows to reduce the problem to estimating traces of the one-dimensional Dirichlet Laplace operator on open intervals.

In this paper we use the same approach, but with new estimates in the one-dimensional case, in order to make the dependence on  the geometry more transparent. 
The new one-dimensional bounds involve the distance to the boundary of the interval in question and are related to Hardy-Lieb-Thirring inequalities for Schr\"odinger operators, see \cite{EkhFra06}. There it is shown that for $\sigma \geq 1/2$ and potentials $V \in L^{\sigma + 1/2}(\R_+)$, given on the half-line $\R_+ = (0,\infty)$, the inequality
$$
\tr \lk - \frac{d^2}{dt^2} - V \rk_-^\sigma \, \leq \, L_{\sigma} \int_0^\infty \lk V(t)-\frac{1}{4t^2} \rk_+^{\sigma+1/2} dt
$$
holds true, with a constant $L_\sigma$ independent of $V$. For further developments see \cite{FrLiSe08,Frank09}.

We start this paper with analysing the special case of the Dirichlet Laplace operator given on a finite interval $I \subset \R$, with the constant potential $V \equiv \La$. For $\sigma \geq 1$ we establish that the estimate
$$
\tr \lk - \frac{d^2}{dt^2} - \La \rk_-^\sigma \, \leq \, L^{cl}_{\sigma,1} \int_I \lk \La-\frac{1}{4\delta(t)^2} \rk_+^{\sigma+1/2} dt
$$
is valid with the sharp constant $L^{cl}_{\sigma,1}$, where $\delta(t)$ denotes the distance to the boundary of $I$. This is done in section \ref{sec:1d}.

Then we can use results from \cite{LapWei00,Weidl08} to deduce bounds in higher dimensions: In section \ref{sec:higherd} we first derive improvements of $(\ref{in:beliyau})$, which are valid for any open set $\Omega \subset \R^d$, $d \geq 2$. These improvements depend on the geometry of $\Omega$. In view of the asymptotic result (\ref{eq:asymptotics}) one might expect, that this geometric dependence can be expressed in terms of the boundary of $\Omega$. To see this, we adapt methods, which were used in \cite{Davies95,Davies99,HoHoLa02} to derive geometric versions of Hardy's inequality. Here the result gives an improved Berezin inequality with a correction term of correct order depending on geometric properties of the boundary.

If $\Omega$ is convex and smooth this dependence can be expressed only in terms of $|\Omega|$, $|\partial \Omega|$ and the curvature of the boundary. In particular the first remainder term of the estimate is very similar to the second term of the semiclassical asymptotics (\ref{eq:asymptotics}): it shows the same order in $\La$ and it depends only on the surface area of the boundary. 

In section \ref{sec:lower} we return to the general case, where $\Omega \subset \R^d$ is not necessarily convex or smooth, and obtain lower bounds on individual eigenvalues $\lambda_k$. Under certain conditions on the geometry of $\Omega$ these results improve the estimate
\begin{equation}
\label{in:liyau}
\lambda_k \, \geq \, C_d \, \frac{d}{d+2} \lk \frac{k}{|\Omega|} \rk^{2/d} \, ,
\end{equation}
from \cite{LiYau83}, where $C_d$ denotes the semi-classical constant $4 \pi \, \Gamma(d/2+1)^{2/d}$.

Finally, in section \ref{sec:2d}, we specialise to the two-dimensional case, where we can use the foregoing results and refined geometric considerations to further improve and generalise the inequalities. In particular, we avoid dependence on curvature, thus we do not require smoothness of the boundary.

The question whether such improved estimates can be generalised to $1 \leq \sigma < 3/2$ remains open.

%%%%%%%%%%%%%%%%%%%%%%%%%%%%%%%%%%%%%%%%%%%%%%%%%%%%%%%%%%%%%%%%%%%%%%%%%%%%%%

\section{One-dimensional considerations}
\label{sec:1d}

Let us consider an open interval $I \subset \R$  of length $l > 0$. For $t \in I$  let
$$
\delta(t) \, = \, \inf \left\{ |t-s| \, : \, s \notin I \right\}
$$
be the distance to the boundary. The eigenvalues of $-d^2/dt^2$ subject to Dirichlet boundary conditions at the endpoints of $I$ are given by $\lambda_k \, = \, k^2 \pi^2 / l^2$. Therefore the Riesz means equal
$$
\tr \lk - \frac{d^2}{dt^2} - \La \rk^\sigma_- \, = \, \sum_k \lk \La - \frac{k^2 \pi^2}{l^2} \rk_+^\sigma \, .
$$
To find precise bounds on the Riesz means in the one-dimensional case, it suffices to analyse this sum explicitly. Our main observation is

\begin{lemma}
\label{lem:1d}
Let $I \subset \R$ be an open interval and $\sigma \geq 1$. Then the estimate 
$$
\tr \lk - \frac{d^2}{dt^2} - \La \rk_-^\sigma \, \leq \, L^{cl}_{\sigma,1} \int_I \lk \La - \frac{1}{4\delta(t)^2}  \rk_+^{\sigma+1/2} dt \, ,
$$
holds true for all $\La > 0$. The constant $1/4$ cannot be improved.
\end{lemma}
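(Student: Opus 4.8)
The plan is to reduce the two-sided interval estimate to a one-sided (half-line) model problem and then verify a single scalar inequality. Since the distance function $\delta(t)$ for an interval $I=(a,b)$ of length $l$ equals $\min\{t-a,\,b-t\}$, it is symmetric about the midpoint and increases from the endpoints to the center. Write $\lambda_k=k^2\pi^2/l^2$. The left-hand side is the explicit sum $\sum_k(\La-\lambda_k)_+^\sigma$, which depends on $\La$ only through the dimensionless quantity $\mu=\La l^2/\pi^2$; likewise, after the substitution $t=l s$, the right-hand integral becomes $l^{2\sigma+1}$ times a function of $\mu$ alone. So it suffices to prove, for all $\mu>0$ and $\sigma\ge 1$,
\begin{equation}
\label{eq:reduced}
\sum_{k\ge 1}\bigl(\mu-k^2\bigr)_+^\sigma \;\le\; L^{cl}_{\sigma,1}\,\pi^{2\sigma+1}\int_0^1\Bigl(\frac{\mu}{\pi^2}-\frac{1}{4\,(l\,\delta\text{-scaled})^2}\Bigr)_+^{\sigma+1/2} ds,
\end{equation}
which, unwinding, is a one-variable inequality between an explicit lattice sum and an explicit integral.

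First I would handle the right-hand side. Using the symmetry of $\delta$, the integral over $I$ equals twice the integral over the half-interval where $\delta(t)=t-a$; extending the region of integration from $(a,(a+b)/2)$ to the whole half-line $(a,\infty)$ only enlarges the right-hand side (the integrand is nonnegative), so it is enough to prove the stronger bound with $\int_a^\infty(\La-1/(4(t-a)^2))_+^{\sigma+1/2}\,dt$ on the right, i.e.\ to compare the \emph{interval} trace with a \emph{half-line Hardy-weight} integral, doubled. After translating $a\to 0$ and scaling out $\La$, the half-line integral $\int_0^\infty(\La-1/(4t^2))_+^{\sigma+1/2}\,dt$ evaluates in closed form (substitute $t=\tau/(2\sqrt\La)$, giving $\La^{\sigma+1/2}\cdot\frac12\int_1^\infty(1-\tau^{-2})^{\sigma+1/2}\,d\tau$ after a further change of variables, a Beta integral). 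So the target becomes: the spectral sum $\sum_k(\La-k^2\pi^2/l^2)_+^\sigma$ is at most an explicit constant (involving $L^{cl}_{\sigma,1}$ and a Beta-function value) times $l\,\La^{\sigma+1/2}$ — and one checks that this explicit constant is exactly $L^{cl}_{\sigma,1}$ by recognizing the Beta integral, so in fact the Hardy term on the right has, after the doubling and the half-line extension, reproduced precisely the sharp Weyl constant $L^{cl}_{\sigma,1}\,l\,\La^{\sigma+1/2}$.

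Thus the whole lemma reduces to the sharp one-dimensional Berezin bound $\sum_k(\La-k^2\pi^2/l^2)_+^\sigma\le L^{cl}_{\sigma,1}\,l\,\La^{\sigma+1/2}$ for $\sigma\ge1$ — \emph{except} that the half-line extension was wasteful, so I actually have room to spare, and that slack is exactly what must be shown to outweigh the Hardy subtraction $-1/(4\delta^2)$ that I discarded. So the honest argument keeps the subtraction: set $f(\mu)=\sum_{k\ge1}(\mu-k^2)_+^\sigma$ and $g(\mu)=$ the (rescaled) right side of \eqref{eq:reduced}; show $f(\mu)\le g(\mu)$ for all $\mu>0$. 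For this I would compare derivatives / use the layer-cake (Riesz) identity to write both sides as integrals over thresholds $0<\tau<\mu$ of the counting-type quantities $\#\{k:k^2<\tau\}$ versus the corresponding integral $\int_{\{\delta:\,1/(4\delta^2)<\La\}}\cdots$, reducing $\sigma\ge1$ to the case $\sigma=1$ by integrating the inequality $\sigma\int_0^\mu(\mu-\tau)^{\sigma-1}(\cdots)\,d\tau$; this is the standard monotonicity trick that lets one deduce Riesz-mean bounds of order $\sigma$ from the order-$1$ bound. The base case $\sigma=1$ is then an elementary comparison of the staircase sum $\sum_k(\mu-k^2)_+$ with the explicit integral, which can be done by bounding each summand by an integral of the continuous function over a unit cell and carefully accounting for the Hardy weight near the endpoints — this endpoint bookkeeping, together with verifying that the constant $1/4$ (and not something larger) is the best possible, is the main obstacle.

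For sharpness of the constant $1/4$: I would exhibit a sequence of intervals (or, equivalently, let $\La\to\infty$ with $l$ fixed) for which the ratio of the two sides tends to $1$, so that replacing $1/4$ by any $c>1/4$ would make the right side strictly smaller than the left for large $\La$ — this uses the two-term Weyl asymptotics, since $\int_I(\La-c/\delta^2)_+^{\sigma+1/2}\,dt = l\,\La^{\sigma+1/2}-2\sqrt c\,(\sigma+1/2)B(\ldots)\La^{\sigma}+o(\La^\sigma)$ while $\sum_k(\La-\lambda_k)_+^\sigma$ has a $+\tfrac12 L^{cl}_{\sigma,1}{}^{-1}\ldots$ correction matching $c=1/4$ exactly; pinning down that the subleading coefficients coincide precisely at $c=1/4$ is the crux of the sharpness claim.
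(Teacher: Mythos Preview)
Your proposal contains a genuine error in the first reduction and then, after backing off from it, leaves the actual hard step undone.

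The half-line extension is not just ``wasteful'' --- it is vacuous. The integral
\[
\int_0^\infty \Bigl(\La - \frac{1}{4t^2}\Bigr)_+^{\sigma+1/2}\,dt
\]
diverges: after your own substitution $t=\tau/(2\sqrt\La)$ it becomes a constant times $\int_1^\infty (1-\tau^{-2})^{\sigma+1/2}\,d\tau$, whose integrand tends to $1$ at infinity. This is not a Beta integral and does not equal $L^{cl}_{\sigma,1}\,l\,\La^{\sigma+1/2}$ (note there is no $l$ anywhere on that side). So the paragraph claiming that the doubled half-line integral ``reproduces precisely the sharp Weyl constant'' is false, and nothing can be salvaged from that route.

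What remains of your proposal --- reduce $\sigma\ge 1$ to $\sigma=1$ by the Aizenman--Lieb monotonicity trick, prove $\sigma=1$ by an ``elementary comparison'', and establish sharpness of $1/4$ via the two-term asymptotics --- is exactly the skeleton of the paper's proof. But the entire content of the lemma sits in the base case $\sigma=1$, which you do not actually carry out. Writing it as comparing $\sum_{k\le A}(1-k^2/A^2)$ to $\frac{2}{3\pi}\int_1^{\pi A}(1-s^{-2})^{3/2}\,ds$, the two sides agree to leading order $\tfrac{2A}{3}-\tfrac12$ and one must show the \emph{subleading} terms have the right sign for all $A\ge 1/\pi$; ``bounding each summand by an integral over a unit cell'' does not do this, because the integrand $(1-s^{-2})^{3/2}$ is not the continuous interpolant of the summands $(1-k^2/A^2)$, and the comparison is genuinely delicate near $A$ close to an integer. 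The paper treats this as a separate technical lemma with an explicit computation (closed-form evaluation of the sum, Taylor-type lower bounds on the integral, and a residual polynomial inequality checked by hand for small $A$). Your proposal identifies this step as ``the main obstacle'' but supplies no argument for it.

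Your sharpness argument is correct and matches the paper's: both sides have expansions $L^{cl}_{\sigma,1}\,l\,\La^{\sigma+1/2} + (\tfrac12-\sqrt c)\,\La^\sigma + o(\La^\sigma)$, so any $c>1/4$ fails for large $\La$.
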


The remainder of this section deals with the proof of this estimate. First we need two rather technical results, whose proof is elementary but not trivial and therefore will be given in the appendix.

\begin{lemma}
\label{lem:elementary}
For all $A \geq 1/\pi$
\begin{equation}
\label{in:elementary}
\sum_k \lk 1- \frac{k^2}{A^2} \rk_+ \, \leq \, \frac{2}{3 \pi} \int_1^{\pi  A} \lk 1 - \frac{1}{s^2} \rk^{3/2} \, ds \, .
\end{equation}
\end{lemma}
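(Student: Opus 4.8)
The plan is to turn both sides into explicit functions of $A$, reduce the inequality to a single minimisation on each unit interval $[N,N+1]$ with $N=\lfloor A\rfloor$, and then settle the resulting one‑parameter inequality by elementary estimates.

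For $1/\pi\le A<1$ the left side is $0$ and the right side is nonnegative (the integrand is $\ge0$ on $[1,\pi A]$), so assume $A\ge1$ and set $N=\lfloor A\rfloor\ge1$. Summing the finite series on the left gives
\[
  \sum_k\lk 1-\frac{k^2}{A^2}\rk_+\;=\;N-\frac{N(N+1)(2N+1)}{6A^2}\;=\;N-\frac{c_N}{3A^2},\qquad c_N:=N(N+1)\lk N+\tfrac12\rk=\lk N+\tfrac12\rk^3-\tfrac14\lk N+\tfrac12\rk,
\]
and the substitution $s=\sec\theta$ (or a direct check of the antiderivative) evaluates the right side: $\int_1^{\pi A}(1-1/s^2)^{3/2}\,ds=\sqrt{\pi^2A^2-1}\bigl(1+\tfrac1{2\pi^2A^2}\bigr)-\tfrac32\arccos\tfrac1{\pi A}$. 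Writing $h(A)$ for $\tfrac2{3\pi}$ times the latter expression and $S(A)$ for the rational function above (with $N=\lfloor A\rfloor$), the claim becomes $G:=h-S\ge0$ on $[1,\infty)$; note that $S$, and hence $G$, is continuous there because the $k=N$ term of the sum vanishes at $A=N$.

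Next I would determine the shape of $G$ on a fixed interval $[N,N+1]$. On the open interval $S=N-c_N/(3A^2)$ is concave, while $h$ is strictly convex ($h''(A)=2\pi^{-2}A^{-3}\sqrt{1-\pi^{-2}A^{-2}}>0$), so $G$ is strictly convex on $(N,N+1)$. With $h'(A)=\tfrac23(1-\pi^{-2}A^{-2})^{3/2}$ one has $G'(N^+)=h'(N)-\tfrac{2(N+1)(N+1/2)}{3N^2}<0$, since $h'(N)<\tfrac23<\tfrac{2(N+1)(N+1/2)}{3N^2}$; and a parallel estimate using the convexity bound $(1-x)^{3/2}\ge1-\tfrac32x$ shows $G'\bigl((N+1)^-\bigr)>0$ for every $N\ge1$. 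As $G'$ is increasing on the interval, $G$ first decreases and then increases, so $\min_{[N,N+1]}G=G(A^*_N)$ at the unique interior critical point $A^*_N$, where $h'(A^*_N)=S'(A^*_N)$. This reduction is essential: from the endpoint values $G(N),G(N+1)$ alone one could not conclude $G\ge0$ on the whole interval.

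The critical equation reads $\bigl((A^*_N)^2-\pi^{-2}\bigr)^{3/2}=c_N$, i.e. $(A^*_N)^2=c_N^{2/3}+\pi^{-2}$, hence $\sqrt{\pi^2(A^*_N)^2-1}=\pi\,c_N^{1/3}$. Substituting this into $h(A^*_N)$ and $S(A^*_N)$, the two resulting copies of $\tfrac{c_N^{1/3}}{3(\pi^2c_N^{2/3}+1)}$ cancel, leaving the clean identity
\[
  G(A^*_N)\;=\;c_N^{1/3}-N-\tfrac12+\tfrac1\pi\arcsin\frac1{\sqrt{\pi^2c_N^{2/3}+1}}\,,
\]
so it remains to prove $c_N^{1/3}+\tfrac1\pi\arcsin(\pi^2c_N^{2/3}+1)^{-1/2}\ge N+\tfrac12$ for all $N\ge1$. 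Writing $c_N=(N+\tfrac12)^3(1-x_N)$ with $x_N=\tfrac14(N+\tfrac12)^{-2}\in(0,\tfrac19]$, a Taylor estimate $(1-x)^{1/3}\ge1-\tfrac x3-Cx^2$ (valid on $(0,\tfrac19]$ with a suitable $C<\tfrac16$) bounds the deficit $N+\tfrac12-c_N^{1/3}$ above by $\tfrac1{12(N+1/2)}+\tfrac C{16(N+1/2)^3}$, while $\arcsin y\ge y$ together with $c_N^{2/3}\le(N+\tfrac12)^2$, $\sqrt{1+t}\le1+\tfrac t2$ and $(1+t)^{-1}\ge1-t$ bounds the arcsine term below by $\tfrac1{\pi^2(N+1/2)}-\tfrac1{2\pi^4(N+1/2)^3}$. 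Comparing, the required inequality reduces to $\tfrac1{\pi^2}-\tfrac1{12}\ge\bigl(\tfrac1{2\pi^4}+\tfrac C{16}\bigr)(N+\tfrac12)^{-2}$, and since the left side is the positive constant $(12-\pi^2)/(12\pi^2)$ while for $N\ge1$ the right side is at most $\tfrac49\bigl(\tfrac1{2\pi^4}+\tfrac C{16}\bigr)$, which is smaller, this holds. The main obstacle is precisely this last step: it goes through only because $\pi^2<12$, so the numerical margin is slim and the elementary bounds have to be arranged carefully enough not to squander it.
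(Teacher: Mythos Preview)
Your argument is correct, and it takes a genuinely different route from the paper's own proof. The paper proceeds more directly: it writes the sum in terms of the integer and fractional parts of $A$, bounds the fractional-part contributions by their maxima, then expands the explicit antiderivative of $(1-1/s^2)^{3/2}$ via the elementary inequalities $\arctan(1/\sqrt{\pi^2A^2-1})\ge 1/(\pi A)$ and $\sqrt{\pi^2A^2-1}/(\pi A)\ge 1-\tfrac{1}{2\pi^2A^2}-aA^{-4}$, and finally compares the two resulting polynomial-type expressions; this comparison goes through only for $A\ge 2$, and the range $1\le A\le 2$ is left to a separate ``check by hand''. Your approach instead exploits the convexity of $G=h-S$ on each interval $[N,N+1]$ to reduce everything to the value at the unique interior critical point, where the algebraic cancellation yields the clean formula $G(A^*_N)=c_N^{1/3}-(N+\tfrac12)+\tfrac{1}{\pi}\arcsin\bigl(\pi^2c_N^{2/3}+1\bigr)^{-1/2}$, so that only one elementary inequality in the integer parameter $N$ remains. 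This is structurally tidier---it handles all intervals uniformly with no residual case analysis---though both proofs ultimately rest on the same numerical margin $\pi^2<12$. The paper's argument has the advantage of being completely explicit and requiring no optimisation step, but the cost is the ad hoc verification on $[1,2]$.
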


\begin{lemma}
\label{lem:asympt}
Let $I \subset \R$ be an open interval of length $l>0$. Then for $\sigma \geq 1$ and $c > 0$
$$
L^{cl}_{\sigma,1} \int_I \lk \La - \frac{c}{\delta(t)^2} \rk_+^{\sigma+1/2} dt - \sum_k \lk \La - \frac{\pi^2 k^2}{l^2} \rk_+^\sigma  \, = \, \lk \frac 12 - \sqrt c \rk \La^\sigma + o \lk \La^\sigma \rk
$$
holds as $\La \to \infty$.
\end{lemma}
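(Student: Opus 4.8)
The plan is to establish the two-term large-$\La$ asymptotics of each of the two quantities on the left-hand side separately and then subtract. We may assume $I=(0,l)$, so that $\delta(t)=\min(t,l-t)$ and, by symmetry about $l/2$,
$$
\int_I \lk \La-\frac{c}{\delta(t)^2}\rk_+^{\sigma+1/2}\,dt \;=\; 2\int_{t_0}^{l/2}\lk \La-\frac{c}{t^2}\rk^{\sigma+1/2}\,dt\,,\qquad t_0=\sqrt{c/\La}\,,
$$
which is legitimate once $\La>4c/l^2$, so that $t_0<l/2$.

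For the integral term I would substitute $t=\sqrt{c/\La}\,v$, turning the right-hand side into $2\sqrt c\,\La^\sigma\int_1^R(1-v^{-2})^{\sigma+1/2}\,dv$ with $R=\tfrac l2\sqrt{\La/c}\to\infty$. Writing $(1-v^{-2})^{\sigma+1/2}=1+[(1-v^{-2})^{\sigma+1/2}-1]$ and noting the bracket is $O(v^{-2})$ at infinity and bounded near $v=1$, one obtains $\int_1^R(1-v^{-2})^{\sigma+1/2}\,dv=R-(2\sigma+1)\int_0^1(1-w^2)^{\sigma-1/2}\,dw+o(1)$, the constant being identified after the change of variable $w=1/v$ and one integration by parts. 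Since $L^{cl}_{\sigma,1}\int_0^1(1-w^2)^{\sigma-1/2}\,dw=\tfrac1{2(2\sigma+1)}$ by a Beta-function evaluation, the leading terms assemble to $l\,L^{cl}_{\sigma,1}\,\La^{\sigma+1/2}$ and
$$
L^{cl}_{\sigma,1}\int_I\lk \La-\frac{c}{\delta(t)^2}\rk_+^{\sigma+1/2}\,dt \;=\; l\,L^{cl}_{\sigma,1}\,\La^{\sigma+1/2}-\sqrt c\,\La^\sigma+o\lk\La^\sigma\rk\,.
$$

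For the eigenvalue sum I would use that $g_\La(t):=(\La-\pi^2t^2/l^2)_+^\sigma$ is even, so that $\sum_{k\in\mathbb Z}g_\La(k)=\La^\sigma+2\sum_{k\ge1}g_\La(k)$, and apply the Poisson summation formula to the full lattice sum. The $n=0$ Fourier coefficient equals $\int_\R g_\La\,dt=2l\,L^{cl}_{\sigma,1}\,\La^{\sigma+1/2}$, while the coefficients with $n\neq0$ are governed by the Fourier transform of $(1-s^2)_+^\sigma$, which is an explicit Bessel function decaying like $|\xi|^{-\sigma-1}$; after the scaling $g_\La(t)=\La^\sigma(1-t^2\pi^2/(l^2\La))_+^\sigma$ this gives $\sum_{n\neq0}\widehat{g_\La}(2\pi n)=O(\La^{\sigma/2})=o(\La^\sigma)$. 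Hence
$$
\sum_k\lk\La-\frac{\pi^2k^2}{l^2}\rk_+^\sigma \;=\; l\,L^{cl}_{\sigma,1}\,\La^{\sigma+1/2}-\tfrac12\,\La^\sigma+o\lk\La^\sigma\rk\,,
$$
and subtracting the two displays, the Weyl terms cancel and $(\tfrac12-\sqrt c)\La^\sigma+o(\La^\sigma)$ remains.

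The Gamma-function bookkeeping steps are routine; the delicate point is the error estimate in the Poisson summation -- equivalently, the midpoint-rule error for the sum -- since $(1-s^2)_+^\sigma$ is only finitely differentiable at the turning points $\pm1$ when $\sigma$ is not a large integer. I expect this to be the main obstacle. One clean way around it is to invoke the explicit identity $\int_{-1}^1(1-s^2)^\sigma e^{-i\xi s}\,ds=\sqrt\pi\,\Gamma(\sigma+1)(2/|\xi|)^{\sigma+1/2}J_{\sigma+1/2}(|\xi|)$ together with $J_\nu(x)=O(x^{-1/2})$ to get the decay $O(|\xi|^{-\sigma-1})$ directly; alternatively one works on the sum side, splitting off the $O(1)$ terms nearest the turning point $k\approx l\sqrt\La/\pi$ (whose total contribution, like that of the integral over the same range, is trivially $o(\La^\sigma)$) and applying a standard Euler--Maclaurin estimate on the remaining range, where $g_\La\in C^2$ and $\int|g_\La''|=O(\La^{\sigma-1/2})=o(\La^\sigma)$.
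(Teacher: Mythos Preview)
Your argument is correct and follows the same overall plan as the paper: compute the two-term asymptotics of the integral and of the eigenvalue sum separately, and subtract so that the Weyl terms cancel. The paper also reduces to $I=(0,l)$, performs the same substitution $t=s\sqrt{c/\La}$, and works with the variable $A=l\sqrt{\La}/\pi$.

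The technical realisations differ, however. For the integral constant, the paper expands $(1-1/s^2)^{\sigma+1/2}$ as a binomial series and integrates term by term, identifying the constant as $\tfrac12 B(-\tfrac12,\sigma+\tfrac32)$; your integration by parts after the change $w=1/v$ gives the equivalent expression $-(2\sigma+1)\int_0^1(1-w^2)^{\sigma-1/2}\,dw$, and your Beta-function identity $L^{cl}_{\sigma,1}\int_0^1(1-w^2)^{\sigma-1/2}\,dw=\tfrac{1}{2(2\sigma+1)}$ is correct. For the sum, the paper simply asserts $\sum_k(1-k^2/A^2)_+^\sigma=\tfrac{A}{2}B(\sigma+1,\tfrac12)-\tfrac12+o(1)$ as ``easy to see'', whereas you actually supply an argument via Poisson summation (or Euler--Maclaurin). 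Your Bessel-function route is sound: $g_\La$ is continuous and compactly supported, its Fourier transform decays like $|\xi|^{-\sigma-1}$, and since $\sigma\ge 1$ the tail sum over $n\neq 0$ is absolutely convergent and of order $\La^{\sigma/2}=o(\La^\sigma)$, so the concern you flag about the turning points is in fact already handled. In short, your proof fills in precisely the step the paper omits.
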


\begin{proof}[Proof of Lemma \ref{lem:1d}]
Note that one can always assume $I = (0,l)$, where $l > 0$ denotes the length of the interval.
First we deduce the estimate for $\sigma = 1$ from Lemma \ref{lem:elementary}. Assume $\La \geq l^{-2}$ and apply Lemma \ref{lem:elementary} with $A = l \sqrt{\La} / \pi$ to get
$$
\tr \lk - \frac{d^2}{dt^2} - \La \rk_- \, = \, \La \sum_k \lk 1 - \frac{\pi^2 k^2}{l^2 \La} \rk_+ \, \leq \, \La \, \frac{2}{3 \pi} \int_1^{l \sqrt \La} \lk 1 - \frac{1}{s^2} \rk^{3/2} \, ds \, .
$$
Substituting $s = 2 t \sqrt \La$, we find that
$$
\tr \lk - \frac{d^2}{dt^2} - \La \rk_- \, \leq \, \frac{4}{3 \pi} \int_{1/(2\sqrt\La)}^{l/2} \lk \La - \frac{1}{4t^2} \rk^{3/2} dt \, = \, 2 L^{cl}_{1,1} \int_0^{l/2} \lk \La - \frac{1}{4t^2} \rk_+^{3/2} dt
$$
holds for all $\La \geq l^{-2}$. Note that this inequality is trivially true for $0 < \La < l^{-2}$, since the left hand side is zero. Finally, we use the identities
$$
\int_0^{l/2} \lk \La - \frac{1}{4t^2} \rk_+^{3/2} dt \, = \, \int_{l/2}^l \lk \La - \frac{1}{4(l-t)^2} \rk_+^{3/2} dt \, = \, \frac 12 \int_0^{l} \lk \La - \frac{1}{4\delta(t)^2} \rk_+^{3/2} dt
$$
to finish the proof for $\sigma = 1$.

To deduce the claim for $\sigma > 1$ we can apply a method from \cite{AizLie78}. Writing
$$
\sum_k \lk \La - \lambda_k \rk_+^\sigma \, = \, \frac{1}{B(2,\sigma-1)} \int_0^\infty \tau^{\sigma-2} \sum_k \lk \La - \tau - \lambda_k \rk_+ \, d\tau \, ,
$$
we estimate
\begin{eqnarray*}
\tr \lk - \frac{d^2}{dt^2} - \La \rk_-^{\sigma} & \leq & \frac{1}{B(2,\sigma-1)} L^{cl}_{1,1} \int_I \int_0^\infty \tau^{\sigma-2} \lk \La - \frac{1}{4 \delta(t)^2} - \tau \rk_+^{3/2} \, d\tau \, dt \\
& = & L^{cl}_{1,1} \frac{B( 5/2,\sigma-1)}{B(2,\sigma-1)} \int_I \lk \La - \frac{1}{4 \delta(t)^2} \rk_+^{\sigma+1/2} \, dt
\end{eqnarray*} 
and the result follows from the identity $L^{cl}_{1,1} B(5/2,\sigma-1) = L^{cl}_{\sigma,1} B(2,\sigma-1)$.

The claim, that the constant $1/4$ cannot be improved, follows from Lemma \ref{lem:asympt}: For $c = 1/4$ the leading term of the asymptotics in Lemma \ref{lem:asympt} vanishes. For any constant $c > 1/4$ the leading term is negative, thus the estimate from Lemma \ref{lem:1d} must fail in this case, for large values of $\La$. 
\end{proof}

Figure \ref{fig:diff} illustrates the results of Lemma \ref{lem:1d} and Lemma \ref{lem:asympt} for $l = \pi^2$ and $\sigma = 1$ with the sharp constant $c = 1/4$: The function
$$
f( \La) \, = \, L^{cl}_{1,1} \int_0^\pi \lk \La - \frac{1}{4 \, \delta(t)^2} \rk_+^{3/2} dt - \sum_k \lk \La - k^2 \rk_+
$$
is plotted for $1 < \La < 112$, so that the first ten minima are shown.

\begin{figure}[ht]
\centering
\includegraphics[width=10cm]{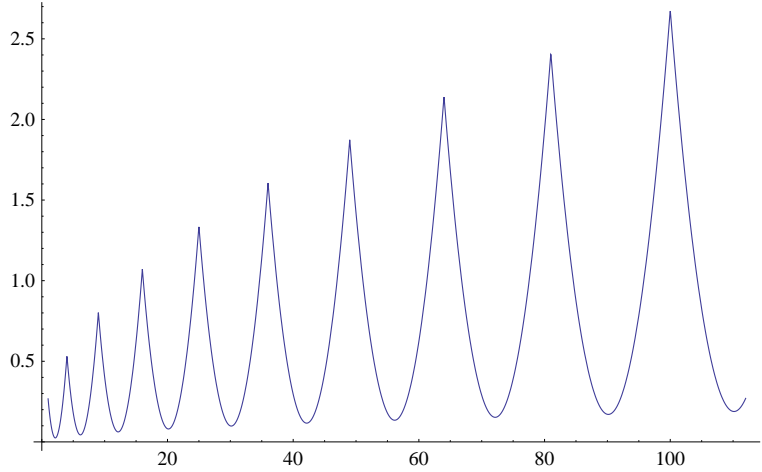}     %{maple2}
\caption{The function $f(\La)$, illustrating the results from section 2.}
\label{fig:diff}
\end{figure}

%%%%%%%%%%%%%%%%%%%%%%%%%%%%%%%%%%%%%%%%%%%%%%%%%%%%%%%%%%%%%%%%%%%%%%%%%

\section{Results in higher dimensions}
\label{sec:higherd}

In this section we use the one-dimensional result to prove uniform eigenvalue estimates for the Dirichlet Laplace operator in bounded open sets in higher dimensions. These estimates - refinements of the Berezin inequality (\ref{in:beliyau}) - depend on the geometry of the set, in particular on properties of the boundary.

\subsection{Arbitrary open sets}
First we provide general estimates, valid for any open subset $\Omega \subset \R^d$, $d \geq 2$. Let
$$
\mathbb{S}^{d-1} \, = \, \left\{ x \in \R^d \, : \, |x| = 1 \right\}
$$
denote the unit-sphere in $\R^d$. For an arbitrary direction $u \in \mathbb{S}^{d-1}$ and $x \in \Omega$ set
\begin{eqnarray*}
\theta(x,u) & = &  \inf \left\{ t>0 \, : \, x + tu \notin \Omega \right\} \, , \\
d(x,u) & =  & \inf \{ \theta(x,u), \theta(x,-u) \} \quad \textnormal{and} \\
l(x,u) & = & \theta(x,u) + \theta(x,-u) \, .
\end{eqnarray*}

\begin{theorem}
\label{thm:general}
Let $\Omega \subset \R^d$ be an open set and let $u \in \mathbb{S}^{d-1}$ and $\sigma \geq 3/2$. Then for all $\La > 0$ the estimate 
\begin{equation}
\label{eq:thm:general}
\tr \lk -\Delta - \La \rk^\sigma_- \, \leq \, L^{cl}_{\sigma,d} \int_\Omega \lk \La - \frac{1}{4 \,  d(x,u)^2} \rk_+^{\sigma+d/2} \, dx
\end{equation}
holds true.
\end{theorem}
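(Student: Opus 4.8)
The plan is to reduce the $d$-dimensional problem to the one-dimensional estimate of Lemma \ref{lem:1d} by fibering $\R^d$ into lines parallel to the fixed direction $u$ and applying an operator-valued (lifting) argument. Without loss of generality take $u = e_d$, the last coordinate direction, and write points of $\R^d$ as $x = (x', t)$ with $x' \in \R^{d-1}$ and $t \in \R$. The Dirichlet Laplacian on $\Omega$ then decomposes as $-\Delta = -\partial_t^2 - \Delta_{x'}$, and along each fiber $\{x' = \text{const}\}$ the operator $-\partial_t^2$ acts on $L^2$ of the (at most countably many) open intervals $\Omega_{x'} = \{ t : (x',t) \in \Omega \}$ with Dirichlet conditions. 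The key input is the sharp operator-valued Lieb--Thirring / lifting inequality from \cite{LapWei00,Weidl08}, valid precisely for $\sigma \geq 3/2$, which states that for the operator $-\partial_t^2 \otimes I + I \otimes T$ (with $T \geq 0$ an auxiliary operator, here $T = -\Delta_{x'} - \Lambda + \text{shift}$) one has
$$
\tr\lk -\Delta - \Lambda \rk_-^\sigma \, \leq \, L^{cl}_{\sigma,1} \int_{\R^{d-1}} \tr_{L^2(\R)}\!\lk -\partial_t^2 - W(x') \rk_-^{\sigma + 1/2} \, dx' \, ,
$$
where $W(x')$ is itself an operator in the transverse variables obtained from $\Lambda - (-\Delta_{x'})$; more precisely one iterates the one-dimensional bound fiberwise and then re-assembles.

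The cleaner route, which I expect the authors to take, is: first apply Lemma \ref{lem:1d} in the $t$-variable to the fiber operator, which replaces the one-dimensional Dirichlet Laplacian on $\Omega_{x'}$ by the potential $\tfrac{1}{4}\delta_{x'}(t)^{-2}$, where $\delta_{x'}(t)$ is the distance from $t$ to the complement of $\Omega_{x'}$ inside $\R$ — and this distance is exactly $d(x,u)$ with $x = (x',t)$. Carrying the spectral parameter through the operator-valued lifting (treating $\Lambda$ as $\Lambda - (-\Delta_{x'})$, then applying the one-dimensional inequality with this operator-valued ``$\Lambda$'', which is legitimate by the monotonicity and the Birman--Schwinger/variational form of the argument in \cite{Weidl08}), one arrives at
$$
\tr\lk -\Delta - \Lambda \rk_-^\sigma \, \leq \, L^{cl}_{\sigma,1} \int_{\R^{d-1}} \tr_{L^2(\R)} \lk \La + \Delta_{x'} - \frac{1}{4 d((x',t),u)^2} \rk_+^{\sigma + 1/2} dx' \, .
$$
The remaining transverse trace is estimated by the sharp Berezin inequality (\ref{in:beliyau}) in dimension $d-1$ applied to $-\Delta_{x'}$ with the (pointwise in $t$) effective spectral parameter $\Lambda - \tfrac14 d^{-2}$, using that $L^{cl}_{\sigma+1/2,\,d-1}\cdot L^{cl}_{\sigma,1}$ combines to $L^{cl}_{\sigma,d}$ by the standard Beta-function identity for semiclassical constants. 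Integrating the resulting $\lk \Lambda - \tfrac{1}{4 d(x,u)^2}\rk_+^{\sigma + d/2}$ over $x' \in \R^{d-1}$ and then over $t$ (Fubini) reconstitutes $\int_\Omega$ and yields (\ref{eq:thm:general}).

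The main obstacle is making the operator-valued substitution rigorous: one wants to insert the operator $\Lambda + \Delta_{x'}$ in place of the scalar $\Lambda$ in Lemma \ref{lem:1d}, and this is not automatic from the scalar statement. The correct framework is the one of \cite{LapWei00} for $\sigma \geq 3/2$ (the Laptev--Weidl operator-valued Lieb--Thirring inequality with sharp constant $L^{cl}_{\sigma,1}$), together with the observation in \cite{Weidl08} that Lemma \ref{lem:1d}, being an inequality between the trace and an integral of a function of the spectral parameter with the sharp one-dimensional constant, lifts verbatim to the operator-valued setting because its proof reduces (via the Aizenman--Lieb \cite{AizLie78} argument already used above and the case $\sigma = 1$) to a pointwise comparison that is preserved under the spectral calculus. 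One must also be slightly careful that $\Omega$ need not be connected or nice, so the fibers $\Omega_{x'}$ may be countable unions of intervals; but Lemma \ref{lem:1d} applies to each component interval and the bounds add up, while $d(x,u)$ is defined component-wise in the natural way, so no difficulty arises there. Once the lifting is granted, the rest is the bookkeeping of semiclassical constants and Fubini, which I would not spell out in detail.
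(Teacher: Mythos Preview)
Your ingredients are correct --- fibering along $u$, Lemma~\ref{lem:1d}, and the sharp operator-valued Lieb--Thirring inequality of \cite{LapWei00} --- but the order in which you apply them is reversed relative to the paper, and your order carries a genuine gap.

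The paper does \emph{not} lift Lemma~\ref{lem:1d} to an operator-valued statement. Instead it applies the operator-valued Lieb--Thirring inequality of \cite{LapWei00} in the \emph{transverse} $(d-1)$-dimensional variables, with operator-valued potential $V(x',\Lambda)=\bigl(-\partial_t^2-\Lambda\bigr)_-$ (the negative part of the one-dimensional Dirichlet problem on the slice $\Omega(x')$). After a quadratic-form comparison and extension to $L^2(\R^{d-1},L^2(\R))$, this yields for $\sigma\ge 3/2$
\[
\tr(-\Delta-\Lambda)_-^\sigma \ \le\ L^{cl}_{\sigma,d-1}\int_{\R^{d-1}} \tr\, V(x',\Lambda)^{\sigma+(d-1)/2}\,dx'\,,
\]
and the integrand is now exactly the \emph{scalar} one-dimensional trace $\tr(-\partial_t^2-\Lambda)_-^{\sigma+(d-1)/2}$ on $\Omega(x')$, to which the ordinary Lemma~\ref{lem:1d} applies interval by interval. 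The constants combine via $L^{cl}_{\sigma,d-1}\,L^{cl}_{\sigma+(d-1)/2,1}=L^{cl}_{\sigma,d}$, not the $L^{cl}_{\sigma,1}\,L^{cl}_{\sigma+1/2,d-1}$ pairing you wrote.

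Your route --- an operator-valued Lemma~\ref{lem:1d} first, then Berezin in the $x'$-variables --- runs into two problems. First, the operator-valued version of Lemma~\ref{lem:1d} you invoke is not established: the proof of Lemma~\ref{lem:1d} goes through the explicit eigenvalue estimate of Lemma~\ref{lem:elementary}, and your assertion that it ``lifts verbatim'' via spectral calculus is not justified. If the interval were fixed one could diagonalise the operator-valued spectral parameter and reduce to the scalar case, but here the fibre domain $\Omega_{x'}$ varies with $x'$, so no tensor structure is available. Second, your subsequent Berezin step in dimension $d-1$ needs a Dirichlet Laplacian on a well-defined transverse domain, and your displayed formula is internally inconsistent (integration over $\R^{d-1}$ paired with the constant $L^{cl}_{\sigma,1}$, exponent $\sigma+1/2$, and a trace in $L^2(\R)$). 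Both difficulties disappear once you swap the order as the paper does: the operator-valued step is then the standard \cite{LapWei00} inequality on $\R^{d-1}$, and Lemma~\ref{lem:1d} is only ever needed in its scalar form.
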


\begin{remark}
Let us define
\begin{equation}
\label{eq:lineseg}
l_0 \, = \, \inf_{u \in \mathbb{S}^{d-1}} \sup_{x \in \Omega} \, l(x,u) \, .
\end{equation}
Then Theorem \ref{thm:general} implies the following improvement of Melas-type: For $\sigma \geq 3/2$ and all $\La > 0$ the estimate
\begin{equation}
\label{in:simple}
\tr \lk -\Delta - \La \rk^\sigma_- \, \leq \, L^{cl}_{\sigma,d} \, |\Omega|   \lk  \La - \frac{1}{l_0^2} \rk_+^{\sigma+d/2}
\end{equation}
holds.
In convex domains $l_0$ is the minimal width of the domain, see \cite{BonFen48}. In this case $1/l_0^2$ is bounded from below by a multiple of $|\Omega|^{-2/d}$, \cite{YagBol61}, while no such bound holds for the improving term $|\Omega|/I(\Omega)$ in Melas' inequality (\ref{eq:melas}).
\end{remark}

The proof of Theorem \ref{thm:general} relies on a lifting technique, which was introduced in \cite{Laptev97}, see also \cite{LapWei00, ExLiWe04,Weidl08,FraLap08} for further developments and applications.

\begin{proof}[Proof of Theorem \ref{thm:general}]
We apply the argument used in \cite{Weidl08} to reduce the problem to  one-dimensional estimates. Fix a Cartesian coordinate system in $\R^d$, such that the given direction $u$ corresponds to the vector $(0,\dots,0,1)$.

For $x \in \R^d$ write $x = (x',t) \in \mathbb{R}^{d-1} \times \mathbb{R}$ and let $\nabla'$ and $-\Delta'$ denote the gradient and the Laplace operator in the first $d-1$ dimensions. Each section $\Omega(x') = \{t \in \mathbb{R} : (x',t) \in \Omega \}$ consists of at most countably many open intervals $J_k(x') \subset \R$, $k =1,\dots,N(x') \leq \infty$. 

We consider the quadratic form of $-\Delta - \La$ on functions $\varphi$ from the form core $C_0^\infty(\Omega)$ and write 
\[
\left\| \nabla \varphi \right\|^2_{L^2(\Omega)} - \La \left\|\varphi \right\|^2_{L^2(\Omega)} \,= \,\left\| \nabla' \varphi \right\|^2_{L^2(\Omega)} + \int_{\R^{d-1}} dx' \int_{\Omega(x')} \lk \left| \partial_t \varphi \right|^2 - \La |\varphi|^2 \rk dt \, .
\]
The functions $\varphi(x',\cdot)$ satisfy Dirichlet boundary conditions at the endpoints of each interval $J_k(x')$ forming $\Omega(x')$, hence 
\begin{eqnarray*}
\int_{\Omega(x')} \lk \left| \partial_t \varphi \right|^2 - \La |\varphi|^2 \rk dt & = & \sum_{k=1}^{N(x')} \int_{J_k(x')} \lk \left| \partial_t \varphi \right|^2 - \La |\varphi|^2 \rk dt\\
& \geq & - \sum_{k=1}^{N(x')} \langle V_k(x') \varphi(x', \cdot), \varphi(x',\cdot) \rangle_{L^2 \lk J_k(x') \rk} \, ,
\end{eqnarray*}
where the bounded, non-negative operators $V_k(x',\La) = (-\partial_t^2-\La)_-$ are the negative parts of the Sturm-Liouville operators $-\partial_t^2 - \La$ with 
Dirichlet boundary conditions on $J_k(x')$. Let 
$$
V(x',\La)= \bigoplus_{k=1}^{N(x')} V_k(x',\La)
$$
be the negative part of $-\partial_t^2 - \La$ on $\Omega(x')$ subject to Dirichlet boundary conditions on the endpoints of each interval $J_k(x')$,
$k=1,\dots,N(x')$, that is on $\partial\Omega(x')$.
Then 
\[
\int_{\Omega(x')} \lk \left| \partial_t \varphi \right|^2 - \La |\varphi|^2 \rk dt \, \geq \, - \langle V(x',\Lambda) \varphi(x',\cdot ), \varphi(x', \cdot ) \rangle_{L^2(\Omega(x'))}
\]
and consequently
\begin{equation*}
\left\| \nabla \varphi \right\|^2_{L^2(\Omega)} - \La \left\|\varphi \right\|^2_{L^2(\Omega)} \, \geq \,  \left\| \nabla' \varphi \right\|^2_{L^2(\Omega)} - \int_{\R^{d-1}} dx' \, \langle V \varphi(x',\cdot ), \varphi(x', \cdot ) \rangle_{L^2(\Omega(x'))}  .
\end{equation*}
Now we can extend this quadratic form by zero to $C_0^\infty \lk \R^d \setminus \partial \Omega \rk$, which is a form core for $( - \Delta_{\R^d \setminus \Omega} ) \oplus \lk -\Delta_\Omega - \La \rk$. This operator corresponds to the left hand side of the equality above, while the semi-bounded form on the right hand side is closed on the larger domain $H^1 \lk \R^{d-1},L^2(\R) \rk$, where it corresponds to the operator 
\begin{equation}
\label{auxop'}
-\Delta' \otimes \mathbb{I} - V(x',\La)\quad\mbox{on}\quad L^2 \lk \R^{d-1} , L^2(\R) \rk\,. 
\end{equation}
Due to the positivity of $-\Delta_{\R^d \setminus \Omega}$ we can use the variational principle to deduce that for any $\sigma\geq 0$
\begin{eqnarray*}
\textnormal{Tr} \lk -\Delta_\Omega - \La \rk^{\sigma}_{-} & = & \textnormal{Tr} \lk \lk -\Delta_{\R^d \setminus \Omega} \rk  \oplus \lk - \Delta_\Omega - \La \rk \rk^{\sigma}_{-}\\
& \leq & \textnormal{Tr} \lk - \Delta' \otimes \mathbb{I} - V(x',\La) \rk_-^{\sigma}.
\end{eqnarray*}
Now we apply sharp Lieb-Thirring inequalities \cite{LapWei00} to the Schr\"odinger operator (\ref{auxop'})
with the operator-valued potential $-V(x',\La)$ and obtain that for $\sigma \geq 3/2$
\begin{equation}
\label{in:basicproof}
\textnormal{Tr} \lk - \Delta_\Omega - \Lambda \rk_-^{\sigma} \,\leq \,L^{cl}_{\sigma,d-1} \int_{\R^{d-1}} 
\textnormal{Tr}\, V(x',\La)^{\sigma+{(d-1)/2}} \,dx' \, .
\end{equation}

To estimate the trace of the one-dimensional differential operator $V(x',\La)$ we apply Lemma \ref{lem:1d}. Our choice of coordinate system implies that for $x = (x',t) \in J_k(x')$ the distance of $t$ to the boundary of the interval $J_k(x')$ is given by $d(x,u)$. Hence, Lemma \ref{lem:1d} implies
\begin{eqnarray*}
\tr V(x',\La)^{\sigma+(d-1)/2} & = & \sum_{k=1}^{N(x')} \tr \lk \left. - \frac{d^2}{dt^2} \right|_{J_k(x')} - \La \rk_-^{\sigma+(d-1)/2} \\
& \leq & L^{cl}_{\sigma+(d-1)/2,1} \int_{\Omega(x')} \lk \La - \frac{1}{4 \, d((x',t),u)} \rk_+^{\sigma+d/2} \, dt 
\end{eqnarray*}
and the result follows from (\ref{in:basicproof}) and the identity $L^{cl}_{\sigma,d-1} \, L^{cl}_{\sigma+(d-1)/2,1} = L^{cl}_{\sigma,d}$.
\end{proof}

We proceed to analysing the geometric properties of (\ref{eq:thm:general}). Note that the left hand side of (\ref{eq:thm:general}) is independent of the choice of direction $u \in \mathbb{S}^{d-1}$, while the right hand side depends on $u$ and therefore on the geometry of $\Omega$. For a given set $\Omega$ one can minimise the right hand side in $u \in \mathbb{S}^{d-1}$. The result, however, depends on the geometry of $\Omega$ in a rather tricky way. In order to make this geometric dependence more transparent, we average the right hand side of (\ref{eq:thm:general}) over $u \in \mathbb{S}^{d-1}$. Even though the resulting bound is - in general - not as precise as (\ref{eq:thm:general}), it allows a more appropriate geometric interpretation.

To analyse the effect of the boundary, one would like to estimate $d(x,u)$ in terms of the distance to the boundary, see \cite{Davies95,Davies99,HoHoLa02}, where this approach is used to derive geometrical versions of Hardy's inequality. To avoid complications that arise, for example, if the complement of $\Omega$ contains isolated points, we use slightly different notions: For $x \in \Omega$ let
$$
\Omega(x) \, = \, \left\{ y \in \Omega \, : \, x + t(y-x) \in \Omega\, , \ \forall \, t \in [0,1] \right\}
$$
be the part of $\Omega$ that "can be seen" from $x$ and let
$$
\delta(x) \, = \, \inf \left\{ |y-x| \, : \, y \notin \overline{\Omega(x)} \right\}
$$
denote the distance to the exterior of $\Omega(x)$.

For fixed $\varepsilon > 0$ put 
$$
A_\varepsilon(x) \, = \, \left\{ a \in \R^d \setminus \overline{\Omega(x)} \, : \, |x-a| < \delta(x) + \varepsilon \right\}
$$
and for $a \in A_\varepsilon(x)$ set $B_x(a) = \{y \in \R^d \, : \, |y-a| < |x-a| \}$ and 
$$
\rho_a(x) \, = \, \frac{ | B_x(a) \setminus \overline{\Omega(x)} |}{\omega_d |x-a|^d} \, ,
$$
where $\omega_d$ denotes the volume of the unit ball in $\R^d$.
To get a result, independent of $a$ and $\varepsilon$, set
$$
\rho(x) \, = \, \inf_{\varepsilon > 0}  \, \sup_{a \in A_\varepsilon(x)} \rho_a(x) \, .
$$
Note that $\R^d \setminus \overline{\Omega(x)}$ is open, hence $\rho_a(x)>0$ and $\rho(x) >0$ hold for $x \in \Omega$. Finally, we define
$$
M_\Omega(\La) \, = \, \int_{R_\Omega(\La)} \rho(x) \, dx \, ,
$$
where $R_\Omega(\La) \subset \Omega$ denotes the set $\{ x \in \Omega \, : \, \delta(x) < 1/(4\sqrt\La)\}$. The main result of this section allows a geometric interpretation of the remainder term:

\begin{theorem}
\label{thm:gen}
Let $\Omega \subset \R^d$ be an open set with finite volume and $\sigma \geq 3/2$. Then for all $\La > 0$ we have
\begin{equation}
\label{eq:thm:gen}
\tr \lk -\Delta - \La \rk^\sigma_- \, \leq \, L^{cl}_{\sigma,d} \, |\Omega| \La^{\sigma+d/2} - L^{cl}_{\sigma,d} \, 2^{-d+1} \,  \La^{\sigma+d/2} \, M_\Omega(\La) \, .
\end{equation}
\end{theorem}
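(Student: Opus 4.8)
The plan is to average the inequality of Theorem~\ref{thm:general} over all directions $u\in\mathbb{S}^{d-1}$ and to read off the negative boundary term $M_\Omega(\La)$ from the resulting average. Since the left-hand side of (\ref{eq:thm:general}) does not depend on $u$, dividing by $|\mathbb{S}^{d-1}|=d\,\omega_d$ and integrating in $u$ (the integrand is nonnegative, so Tonelli's theorem applies) gives
\[
\tr\left(-\Delta-\La\right)_-^\sigma \,\leq\, \frac{L^{cl}_{\sigma,d}}{|\mathbb{S}^{d-1}|}\int_\Omega\int_{\mathbb{S}^{d-1}}\left(\La-\frac{1}{4\,d(x,u)^2}\right)_+^{\sigma+d/2}du\,dx\, .
\]
I would then write $\left(\La-\tfrac{1}{4d(x,u)^2}\right)_+^{\sigma+d/2}=\La^{\sigma+d/2}-R(x,u)$ with $R(x,u)\geq0$, observing that $R(x,u)=\La^{\sigma+d/2}$ as soon as $d(x,u)<1/(2\sqrt\La)$, because then $\La-1/(4d(x,u)^2)<0$. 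Hence
\[
\frac{1}{|\mathbb{S}^{d-1}|}\int_{\mathbb{S}^{d-1}}R(x,u)\,du \,\geq\, \frac{\La^{\sigma+d/2}}{|\mathbb{S}^{d-1}|}\,\bigl|\{u\in\mathbb{S}^{d-1}:d(x,u)<1/(2\sqrt\La)\}\bigr|\, ,
\]
and (\ref{eq:thm:gen}) follows once this right-hand side is shown to be at least $2^{-d+1}\La^{\sigma+d/2}\rho(x)$ for every $x\in R_\Omega(\La)$; after that one integrates in $x$ over $R_\Omega(\La)$ and discards the nonnegative contribution of $\Omega\setminus R_\Omega(\La)$, recognizing $M_\Omega(\La)$ by its definition.

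So everything reduces to the spherical estimate
\[
\bigl|\{u\in\mathbb{S}^{d-1}:d(x,u)<1/(2\sqrt\La)\}\bigr|\,\geq\,2^{-d+1}\,|\mathbb{S}^{d-1}|\,\rho(x)\, ,\qquad x\in R_\Omega(\La)\, ,
\]
which I would prove geometrically, in the spirit of \cite{Davies95,Davies99,HoHoLa02}. Fix $x\in R_\Omega(\La)$, so $\delta(x)<1/(4\sqrt\La)$; choose $\varepsilon>0$ with $\delta(x)+\varepsilon<1/(4\sqrt\La)$, pick any $a\in A_\varepsilon(x)$, and set $r=|x-a|<1/(4\sqrt\La)$ and $\nu=(a-x)/r$. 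The first observation is that every $y\in B_x(a)\setminus\overline{\Omega(x)}$ satisfies $|x-y|<2r$ (triangle inequality), and that the segment $[x,y]$ cannot lie in $\Omega$, for otherwise $y\in\Omega(x)\subset\overline{\Omega(x)}$; hence, with $u_y=(y-x)/|y-x|$, we get $\theta(x,u_y)\leq|x-y|<2r<1/(2\sqrt\La)$ and therefore $d(x,u_y)<1/(2\sqrt\La)$.

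To convert this into a measure estimate I would pass to polar coordinates centred at $x$. The chord $\{x+tu:t>0\}\cap B_x(a)$ is exactly the interval $0<t<2r\langle u,\nu\rangle_+$, so
\[
\bigl|B_x(a)\setminus\overline{\Omega(x)}\bigr|\,\leq\,\int_{G_a}\int_0^{2r\langle u,\nu\rangle_+}t^{d-1}\,dt\,du\,\leq\,\frac{(2r)^d}{d}\,|G_a|\, ,
\]
where $G_a\subset\mathbb{S}^{d-1}$ is the set of directions $u$ for which $x+tu\notin\overline{\Omega(x)}$ for some $t\in(0,2r\langle u,\nu\rangle_+)$; by the previous step every $u\in G_a$ satisfies $d(x,u)<1/(2\sqrt\La)$, and $G_a$ is contained in the open hemisphere $\{\langle u,\nu\rangle>0\}$. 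Combining the display with $|B_x(a)\setminus\overline{\Omega(x)}|=\rho_a(x)\,\omega_d\,r^d$ yields $|G_a|\geq 2^{-d}\,d\,\omega_d\,\rho_a(x)=2^{-d}\,|\mathbb{S}^{d-1}|\,\rho_a(x)$. Finally, since $d(x,u)=d(x,-u)$, the set $\{u:d(x,u)<1/(2\sqrt\La)\}$ contains both $G_a$ and $-G_a$, and these are disjoint (opposite open hemispheres), so it has measure at least $2^{-d+1}|\mathbb{S}^{d-1}|\,\rho_a(x)$. Taking the supremum over $a\in A_\varepsilon(x)$ and then the infimum over $\varepsilon>0$ produces the factor $\rho(x)$, which is the required estimate.

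Once the spherical estimate is established, assembling the inequality is routine, so the substantive step is the geometric argument above. I expect the main obstacle to be the bookkeeping between $\Omega$ and its "visible" part $\Omega(x)$ — in particular verifying that a point of $B_x(a)\setminus\overline{\Omega(x)}$ genuinely forces $\theta(x,u_y)$, which is defined through $\Omega$ and not through $\Omega(x)$, to be small — together with arranging the $\varepsilon$-and-$a$ supremum/infimum so that one lands exactly on $\rho(x)$ rather than on a slightly weaker quantity. One should also take care that the hemisphere containment of $G_a$ is used only to gain the factor $2$ from the $u\mapsto-u$ symmetry, so that no part of the sphere is counted twice.
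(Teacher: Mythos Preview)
Your proposal is correct and follows essentially the same approach as the paper: average Theorem~\ref{thm:general} over $u\in\mathbb{S}^{d-1}$, then for $x\in R_\Omega(\La)$ bound from below the measure of directions with $d(x,u)<1/(2\sqrt\La)$ by comparing the volume $|B_x(a)\setminus\overline{\Omega(x)}|=\rho_a(x)\,\omega_d|x-a|^d$ against the polar-coordinate upper bound $(2|x-a|)^d|G_a|/d$, use the hemisphere containment together with the symmetry $d(x,u)=d(x,-u)$ to gain the factor $2$, and finish with the $\sup_a/\inf_\varepsilon$ passage to $\rho(x)$. Your handling of the $\Omega$ versus $\Omega(x)$ bookkeeping (that $y\notin\overline{\Omega(x)}$ forces $[x,y]\not\subset\Omega$, hence $\theta(x,u_y)\le|x-y|$) is exactly the point the paper uses, and your concern about double-counting on the sphere is addressed correctly by the disjointness of $G_a$ and $-G_a$.
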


The function $\rho(x)$ depends on the behaviour of the boundary close to $x \in \Omega$. For example, $\rho(x)$ is small close to a cusp. On the other hand $\rho(x)$ is larger than $1/2$ in a convex domain. By definition, the function $M_\Omega(\Lambda)$ gives an average of this behaviour over $R_\Omega(\Lambda)$, which is like a tube of width $1/(4\sqrt \La)$ around the boundary. 

Note that $M_\Omega(\La)$ tends to zero as $\La$ tends to infinity. This decay in $\La$ is of the order $(\delta_M - d)/2$,  where $\delta_M$ denotes  the interior Minkowski dimension of the boundary, see e.g. \cite{Lapidu91,FleVas93,FlLeVa95} for definition and examples. If $d-1 \leq \delta_M < d$ and if the upper Minkowski content of the boundary is finite, then the second term of the asymptotic limit of the Riesz means equals $O\lk\La^{\sigma+\delta_M/2}\rk$ as $\La \to \infty$, see \cite{Lapidu91}. Therefore the remainder term in (\ref{eq:thm:gen}) reflects the correct order of growth in the asymptotic limit.

In particular, if the dimension of the boundary equals $d-1$, we find
$$
M_\Omega(\La) = |\partial \Omega| \, \La^{-1/2} + o(\La^{-1/2})
$$
as $\La \to \infty$ and the second term in (\ref{eq:thm:gen}) is in close  correspondence with the asymptotic formula (\ref{eq:asymptotics}).

\begin{proof}[Proof of Theorem \ref{thm:gen}]
We start from the result of Theorem \ref{thm:general} and average over all directions to get
\begin{equation}
\label{eq:hd_basic}
\tr \lk -\Delta - \La \rk^\sigma_- \, \leq \, L^{cl}_{\sigma,d} \, \La^{\sigma + d/2}  \int_\Omega \int_{\mathbb{S}^{d-1}} \lk 1 - \frac{1}{4 \, \La \, d(x,u)^2} \rk_+^{\sigma+d/2}  d\nu(u) \, dx \, ,
\end{equation}
where $d\nu(u)$ denotes the normed measure on $\mathbb{S}^{d-1}$.

For $x \in \Omega$ and $a \notin \overline{\Omega(x)}$ let $\Theta(x,a) \subset \mathbb{S}^{d-1}$ be the subset of all directions $u \in \mathbb{S}^{d-1}$, satisfying $x + su \in B_x(a) \setminus \overline{\Omega(x)}$ for some $s > 0$. For such $s$ we have
\begin{equation}
\label{eq:hd_ball1}
s \, \leq \, 2 \, |x-a| \, .
\end{equation}
By definition of $\rho_a(x)$ and $\Theta(x,a)$ we find
$$
\rho_a(x) \, \omega_d |x-a|^d \, = \, |B_x(a) \setminus \overline{\Omega(x)} | \, \leq \, \int_{\Theta(x,a)} d\nu(u) \, \omega_d (2|x-a|)^d \, ,
$$
hence
\begin{equation}
\label{eq:hd_ball2}
\int_{\Theta(x,a)} d\nu(u) \, \geq \, 2^{-d} \, \rho_a(x)  \, .
\end{equation}
Using (\ref{eq:hd_ball1}) we also see that for $u \in \Theta(x,a)$ the estimate $d(x,u) \leq s \leq  2 \, |x-a|$ holds.

Now fix $\La > 0$ and choose $0 < \varepsilon < 1/(4\sqrt \La)$ and $a \in A_\varepsilon(x)$. By definition of $A_\varepsilon(x)$ it follows that for all $u \in \Theta(x,a)$
\begin{equation}
\label{eq:hd_dist}
d(x,u) \, \leq \, 2|x-a| \, < \, 2 ( \delta(x) + \varepsilon ) \, .
\end{equation}
The set $\Theta(x,a)$ must be contained in one hemisphere of $\mathbb{S}^{d-1}$ which we denote by $\mathbb{S}^{d-1}_+$. Using that $d(x,u) = d(x,-u)$ we estimate
\begin{align*}
\int_{\mathbb{S}^{d-1}} \lk 1 - \frac{1}{4 \La d(x,u)^2} \rk_+^{\sigma+d/2} d\nu(u) \, &= \, 2 \int_{\mathbb{S}^{d-1}_+} \lk 1 - \frac{1}{4 \La d(x,u)^2} \rk_+^{\sigma+d/2} d\nu(u) \\
&\leq \, 1 - 2 \int_{ \left\{ u \in \mathbb{S}^{d-1}_+ \, : \, d(x,u) \leq 1/(2\sqrt \La) \right\} } d\nu(u) \, .
\end{align*}
Assume that $\delta(x) \leq 1/(4 \sqrt\La) - \varepsilon$. From (\ref{eq:hd_dist}) it follows that
$$
\Theta(x,a) \subset \left\{ u \in \mathbb{S}^{d-1}_+ \, : \, d(x,u) \leq 1/(2\sqrt\La) \right\} \, ,
$$
hence, using (\ref{eq:hd_ball2}), we conclude
$$
\int_{\mathbb{S}^{d-1}} \lk 1 - \frac{1}{4 \La d(x,u)^2} \rk_+^{\sigma+d/2} d\nu(u) \, \leq \, 1 - 2 \int_{\Theta(x,a)} d\nu(u) \, \leq \, 1 - 2^{1-d} \rho_a(x) \, .
$$
Since $a \in A_\varepsilon(x)$ was arbitrary we arrive at
$$
\int_{\mathbb{S}^{d-1}} \lk 1 - \frac{1}{4 \La d(x,u)^2} \rk_+^{\sigma+d/2} d\nu(u) \, \leq \, 1 - 2^{1-d} \rho(x) \, ,
$$
for all $x \in \Omega$ with $\delta(x) \leq 1/(4 \sqrt\La) - \varepsilon$ and we can take the limit $\varepsilon \to 0$. 

It follows that 
$$
\int_\Omega \int_{\mathbb{S}^{d-1}} \lk 1 - \frac{1}{4 \La d(x,u)^2} \rk_+^{\sigma+d/2} d\nu(u) \, dx \, \leq \,  |\Omega| - 2^{1-d} \int_{ \left\{ x \in \Omega \, : \, \delta(x) < 1/(4\sqrt\La)  \right\} } \rho(x) \, dx
$$
and inserting this into (\ref{eq:hd_basic}) yields the claimed result.
\end{proof}

\subsection{Convex domains}
If $\Omega \subset \R^d$ is convex, we have $\Omega(x) = \Omega$ and 
\begin{equation}
\label{eq:hd_rho}
\rho(x) \, > \, 1/2
\end{equation}
for all $x \in \Omega$. Thus we can simplify the remainder term, by estimating $M_\Omega(\La)$.

\begin{corollary}
\label{cor:convex}
Let $\Omega \subset \R^d$ be a bounded, convex domain with smooth boundary and assume that the curvature of $\partial \Omega$ is bounded from above by $1/R$. Then for $\sigma \geq 3/2$ and all $\La > 0$ we have 
$$
\tr \lk -\Delta - \La \rk^\sigma_- \, \leq \, L^{cl}_{\sigma,d} \, |\Omega| \La^{\sigma+d/2} - L^{cl}_{\sigma,d} \, 2^{-d-2} \, |\partial \Omega| \, \La^{\sigma+(d-1)/2} \int_0^1 \lk 1 - \frac{d-1}{4R\sqrt\La} \, s \rk_+ \! ds \, .
$$
\end{corollary}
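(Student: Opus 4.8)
The plan is to derive the corollary from Theorem~\ref{thm:gen} by estimating $M_\Omega(\La)$ from below, exploiting convexity and the curvature bound. Since $\Omega$ is convex we have $\Omega(x)=\Omega$ for every $x\in\Omega$, so $\delta(x)$ is simply the Euclidean distance to $\partial\Omega$, and by (\ref{eq:hd_rho}) one has $\rho(x)>1/2$ throughout $\Omega$. Hence $M_\Omega(\La)\ge\frac12\,|R_\Omega(\La)|$, where $R_\Omega(\La)=\{x\in\Omega:\delta(x)<(4\sqrt\La)^{-1}\}$ is the inner tube of width $(4\sqrt\La)^{-1}$ about the boundary, and the problem reduces to bounding the volume of this tube from below in terms of $|\partial\Omega|$ and $R$.

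For that I would first recall the classical equivalence, for a convex body with $C^2$ boundary, between the bound $\kappa_i\le 1/R$ on all principal curvatures and the interior rolling ball condition of radius $R$: for each $x\in\partial\Omega$ the ball $B(x-R\nu(x),R)$, with $\nu$ the outer unit normal, lies in $\Omega$. Using this, I would show that for every $s\in(0,R)$ the normal coordinate map $N(x,r)=x-r\nu(x)$ is a bijection from $\partial\Omega\times[0,s]$ onto $\{y\in\overline\Omega:\delta(y)\le s\}$: surjectivity is trivial (take a nearest boundary point of $y$), and injectivity follows from the rolling ball condition together with the equality case of the triangle inequality. Since $N$ is $C^1$ with Jacobian $\prod_{i=1}^{d-1}(1-r\kappa_i(x))$ and $0\le\kappa_i(x)\le 1/R$, the change of variables formula gives
\[
\bigl|\{y:\delta(y)\le s\}\bigr|=\int_{\partial\Omega}\int_0^s\prod_{i=1}^{d-1}\bigl(1-r\kappa_i(x)\bigr)\,dr\,d\mathcal{H}^{d-1}(x)\ \ge\ |\partial\Omega|\int_0^s\Bigl(1-\frac rR\Bigr)^{d-1}dr ,
\]
and letting $s\uparrow R$ (the integrand vanishes for $r>R$, so no upper restriction on $r$ is really needed) one obtains, for all $\La>0$,
\[
\bigl|R_\Omega(\La)\bigr|\ \ge\ |\partial\Omega|\int_0^{1/(4\sqrt\La)}\Bigl(1-\frac rR\Bigr)_+^{d-1}dr .
\]

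To finish I would substitute $r=s/(4\sqrt\La)$, which turns the last integral into $(4\sqrt\La)^{-1}\int_0^1\bigl(1-s/(4R\sqrt\La)\bigr)_+^{d-1}ds$, and then apply Bernoulli's inequality $(1-as)_+^{d-1}\ge(1-(d-1)as)_+$, valid for $d\ge2$ and $a\ge0$, to replace the exponent $d-1$ by $1$. Inserting the resulting lower bound for $M_\Omega(\La)$ into Theorem~\ref{thm:gen} and bookkeeping the constants --- note that $2^{-d+1}\cdot\frac18=2^{-d-2}$ and $\sigma+d/2-1/2=\sigma+(d-1)/2$ --- yields precisely the asserted inequality.

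The substitution, the Bernoulli step and the constant bookkeeping are routine; the real work is the geometric lemma on the inner tube. The main obstacle there is to justify that, under the curvature hypothesis, the normal coordinate map stays a diffeomorphism out to distance $R$ --- equivalently, that no focal points of $\partial\Omega$ are met within that distance --- and the reformulation via the interior rolling ball of radius $R$ is exactly the tool that makes this transparent, simultaneously giving the injectivity of $N$ and the pointwise lower bound $\prod_i(1-r\kappa_i)\ge(1-r/R)^{d-1}$ on the Jacobian.
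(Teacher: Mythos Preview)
Your argument is correct and follows the same overall strategy as the paper: both start from Theorem~\ref{thm:gen}, use convexity to get $\rho(x)>1/2$ and hence $M_\Omega(\La)\ge\frac12|R_\Omega(\La)|$, and then bound the inner tube volume from below using the curvature hypothesis. The only difference lies in how that last geometric bound is obtained. The paper writes $|R_\Omega(\La)|=\int_0^{1/(4\sqrt\La)}|\partial\Omega_t|\,dt$ via the co-area formula and then invokes Steiner's inequality $|\partial\Omega_t|\ge(1-(d-1)t/R)_+|\partial\Omega|$ as a cited result, arriving at the linear factor directly. You instead rebuild the tube volume through the normal-coordinate map, obtaining the sharper Jacobian bound $(1-r/R)^{d-1}$ first and then weakening it to the linear factor $(1-(d-1)r/R)_+$ via Bernoulli. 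Your route is slightly more self-contained (you essentially reprove the relevant instance of Steiner's theorem rather than cite it) and yields a marginally stronger intermediate inequality before the Bernoulli step, but the two arguments are otherwise identical in structure and output.
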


\begin{proof}
Inserting (\ref{eq:hd_rho}) into the definition of $M_\Omega(\La)$ yields
$$
M_\Omega(\La) \, > \, \frac 12 \int_{R_\Omega(\La)} dx \, .
$$
Let $\Omega_t = \{ x \in \Omega \, : \, \delta(x) > t \}$ be the inner parallel set of $\Omega$ and write
$$
\int_{R_\Omega(\La)} dx \, = \, \int_{ \left\{ x \in \Omega \, : \, \delta(x) < 1/(4\sqrt\La)  \right\} } dx \, =  \, \int_0^{1/(4\sqrt \La)} \left| \partial \Omega_t \right|  dt \, .
$$
Now we can use Steiner's Theorem, see \cite{Guggen77,Berg84a}, namely 
\begin{equation}
\label{in:steiner}
\left| \partial \Omega_t \right| \, \geq \, \lk 1 - \frac{d-1}{R} \, t \rk_+ |\partial \Omega|  \, .
\end{equation}
It follows that
$$
M_\Omega(\La) \, > \, \frac 12 |\partial \Omega| \int_0^{1/(4\sqrt \Lambda)} \lk 1 - \frac{d-1}{R} \, t \rk_+ \, dt \, = \,  \frac{|\partial \Omega|}{8 \sqrt \La} \int_0^1 \lk 1 - \frac{d-1}{4 R \sqrt \La} \, s \rk_+ \! ds \, .
$$
Inserting this into (\ref{eq:thm:gen}) completes the proof.
\end{proof}

Let us single out the case where $\Omega = B_r$ is a ball in $\R^d$ with radius $r > 0$. Note that the first eigenvalue of the Dirichlet Laplace operator on $B_r$ is given by
$$
\lambda_1(B_r) \, = \, \frac{\pi \, j^2_{d/2-1,1}}{\Gamma \lk d/2 +1 \rk^{2/d} \, \left| B_r \right|^{2/d}} \, ,
$$
where $j_{d/2-1,1}$ denotes the first zero of the Bessel function $J_{d/2-1}$. Again, we have to estimate
$$
M_\Omega(\La) \, > \, \frac 12  \int_0^{1/(4\sqrt\La)} \left| \partial(B_r)_t \right| \, dt \, .
$$
Instead of (\ref{in:steiner}) we can now use that 
$$
\left|  \partial \lk B_r \rk_t \right| \, = \, \left| \partial B_r \right| \lk 1 - \frac tr \rk^{d-1}_+
$$
and conclude that for $\sigma \geq 3/2$ and $\La > 0$
$$
\tr \lk -\Delta - \La \rk_-^\sigma \, \leq \, L^{cl}_{\sigma,d} \, \left| B_r \right| \, \La^{\sigma + d/2} - L^{cl}_{\sigma,d} \, 2^{-d-2} \left| \partial B_r \right| \int_0^1 \lk 1- \frac{s}{4r \sqrt\La} \rk_+^{d-1} \! ds  \, \La^{\sigma+(d-1)/2} \, .
$$
We note that $\tr \lk -\Delta -\La \rk_-^\sigma = 0$ for $\La \leq \lambda_1(B_r)$. For $\La > \lambda_1(B_r)$ we apply the foregoing inequality with $\sigma = 3/2$. Then the method from \cite{AizLie78} yields that for $\sigma \geq 3/2$ the inequality
\begin{equation}
\label{eq:hd_ball}
\tr \lk -\Delta - \La \rk_-^\sigma \, \leq \, L^{cl}_{\sigma,d} \, \left| B_r \right| \, \La^{\sigma + d/2} - C_{ba} \, L^{cl}_{\sigma,d-1} \, \left| \partial B_r \right| \, \La^{\sigma+(d-1)/2} 
\end{equation}
holds with a constant
$$
C_{ba} \, = \, \frac{j_{d/2-1,1}}{2^{d+1} \, d \,  \pi^{1/2}} \, \frac{\Gamma \lk (d+4)/2 \rk} {\Gamma \lk (d+5)/2 \rk } \lk 1 - \lk 1 - \frac{1}{4 j_{d/2-1,1}} \rk^d \rk \, > \, 0 \, .
$$

%%%%%%%%%%%%%%%%%%%%%%%%%%%%%%%%%%%%%%%%%%%%%%%%%%%%%%%%%%%%%%%%%%%%%%%%%%%%%%%%%%%%%%%%%%%%%

\section{Lower bounds on individual eigenvalues}
\label{sec:lower}

In order to further estimate the remainder terms, in particular to show that the remainder is negative for all $\La \geq \lambda_1(\Omega)$ - as in (\ref{eq:hd_ball}) for the ball - one needs suitable bounds on the ground state $\lambda_1(\Omega)$.  We point out the following consequence of the proof of Theorem \ref{thm:general} which might be of independent interest.

\begin{corollary}
\label{cor:gdstate}
For any open set $\Omega \subset \R^d$ the estimate
$$
\lambda_1(\Omega) \, \geq \, \frac{\pi^2}{l_0^2}
$$
holds, where $l_0$ is given in (\ref{eq:lineseg}).
\end{corollary}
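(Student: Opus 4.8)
The plan is to read the bound off directly from the dimensional reduction carried out in the proof of Theorem~\ref{thm:general}, now only at the level of quadratic forms (morally, the case ``$\sigma = 0$''). Fix $\La$ with $0 < \La < \pi^2/l_0^2$. Since $l_0 = \inf_{u} \sup_{x\in\Omega} l(x,u)$ and $\pi/\sqrt{\La} > l_0$, there is a direction $u \in \mathbb{S}^{d-1}$ with $\sup_{x\in\Omega} l(x,u) < \pi/\sqrt{\La}$; I would fix such a $u$ and choose Cartesian coordinates so that $u$ is the last coordinate direction, writing $x = (x',t) \in \R^{d-1}\times\R$ and decomposing each section $\Omega(x')$ into its constituent open intervals $J_k(x')$, $k = 1,\dots,N(x')$, exactly as there.

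The key observation is that, for this choice of $\La$ and $u$, the fibre operator $V(x',\La) = \bigoplus_{k} V_k(x',\La)$ is the zero operator for every $x'$. Indeed, $V_k(x',\La) = (-\partial_t^2 - \La)_-$ on $J_k(x')$ is nonzero only if the Dirichlet operator $-\partial_t^2$ on $J_k(x')$ has an eigenvalue strictly below $\La$, i.e.\ only if the length of $J_k(x')$ exceeds $\pi/\sqrt{\La}$; but whenever $(x',t) \in J_k(x')$ that length equals $l((x',t),u) \le \sup_{x\in\Omega}l(x,u) < \pi/\sqrt{\La}$, so no such eigenvalue exists and $V_k(x',\La) = 0$. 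Hence the form identity from the proof of Theorem~\ref{thm:general}, namely
\[
\left\| \nabla \varphi \right\|^2_{L^2(\Omega)} - \La \left\|\varphi \right\|^2_{L^2(\Omega)} \, \geq \, \left\| \nabla' \varphi \right\|^2_{L^2(\Omega)} - \int_{\R^{d-1}} \langle V(x',\La)\varphi(x',\cdot),\varphi(x',\cdot)\rangle_{L^2(\Omega(x'))} \, dx'
\]
valid for all $\varphi \in C_0^\infty(\Omega)$, collapses to $\left\| \nabla \varphi \right\|^2 - \La \left\|\varphi \right\|^2 \ge \left\| \nabla' \varphi \right\|^2 \ge 0$. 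Since $C_0^\infty(\Omega)$ is a form core for $-\Delta_\Omega$, this gives $-\Delta_\Omega \ge \La$ as quadratic forms, i.e.\ $\lambda_1(\Omega) \ge \La$. Letting $\La \uparrow \pi^2/l_0^2$ yields the claim; here one uses that a nonempty open $\Omega$ contains a ball, whence $l_0 > 0$ and the right-hand side is finite, while for $\Omega = \emptyset$ the statement is vacuous.

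I do not expect a genuine obstacle: the proof is essentially bookkeeping built on the reduction already in place. The two points needing care are (i) the exact description of when the one-dimensional fibre operator $V(x',\La)$ vanishes identically — this is where the first-Dirichlet-eigenvalue threshold $\pi^2/l^2$ for an interval of length $l$ enters, matched to the geometric quantity $l(x,u)$ of the chosen direction — and (ii) the (standard) upgrade from the pointwise form bound on the core $C_0^\infty(\Omega)$ to the operator inequality $-\Delta_\Omega \ge \La$. Alternatively, step (ii) can be phrased exactly as in the proof of Theorem~\ref{thm:general}: for some $\sigma > 0$ one has $\tr(-\Delta_\Omega - \La)_-^{\sigma} \le \tr(-\Delta'\otimes\mathbb{I} - V(x',\La))_-^{\sigma}$, and the right-hand side is $0$ once $V(x',\La)\equiv 0$, so $\lambda_1(\Omega) \ge \La$.
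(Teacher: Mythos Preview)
The proposal is correct and follows essentially the same route as the paper: both invoke the dimensional reduction from the proof of Theorem~\ref{thm:general} and observe that $V(x',\La)\equiv 0$ once every interval $J_k(x')$ has length below $\pi/\sqrt{\La}$. Your primary argument reads this off at the quadratic-form level, while the paper (exactly as in your alternative (ii)) deduces it from the Riesz-mean bound (\ref{in:basicproof}); these are the same argument in slightly different wrappers.
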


\begin{proof}
Fix $\varepsilon > 0$ and choose a direction $u_0 \in \mathbb{S}^{d-1}$, such that $\sup_{x \in \Omega} l(x,u_0) < l_0 + \varepsilon$. We write $x = (x',t) \in \R^{d-1} \times \R$, where the $t$-axes is chosen in the direction of $u_0$. Let us recall inequality (\ref{in:basicproof}) from the proof of Theorem \ref{thm:general}: For any $\sigma \geq 3/2$
$$
\textnormal{Tr} \lk - \Delta - \Lambda \rk_-^{\sigma} \,\leq \,L^{cl}_{\sigma,d-1} \int_{\R^{d-1}} 
\textnormal{Tr}\, V(x',\La)^{\sigma+{(d-1)/2}} \,dx' \, ,
$$
where $V(x',\La)$ denotes the negative part of the operator $-\partial_t^2 - \La$ on $\Omega(x') = \bigcup_{k=1}^{N(x')} J_k(x')$ with Dirichlet boundary conditions at the endpoints of each interval $J_k(x')$. This inequality can be rewritten as
$$
\textnormal{Tr} \lk - \Delta - \Lambda \rk_-^{\sigma} \,\leq \,L^{cl}_{\sigma,d-1} \int_{\R^{d-1}} \sum_{k=1}^{N(x')} \sum_{j \in \mathbb{N}} \lk \La - \frac{\pi^2 \, j^2}{|J_k(x')|^2} \rk_+^{\sigma+(d-1)/2} \, dx' \, .
$$
Our choice of coordinate system implies $|J_k (x')| \leq \sup_{x \in \Omega} l(x,u_0) < l_0 + \varepsilon$ for all $k = 1, \dots, N(x')$ and all $x' \in \R^{d-1}$. It follows that the right hand side of the inequality above is zero for all $\La \leq \pi^2/(l_0+\varepsilon)^2$. Thus by taking the limit $\varepsilon \to 0$ we find
$$
\sum_{n \in \mathbb{N}} \lk \La - \lambda_n \rk_+^\sigma \, = \, \textnormal{Tr} \lk - \Delta - \Lambda \rk_-^{\sigma} \, = \,   0 \, ,
$$
for all $\La \leq \pi^2/l_0^2$ and $\lambda_1 \geq \pi^2 / l_0^2$ follows.
\end{proof}

From (\ref{in:simple}) we obtain similar bounds on higher eigenvalues using a method introduced in \cite{Laptev97}.

\begin{corollary}
\label{cor:Nsimple}
For any open set $\Omega \subset \R^d$ with finite volume and any $k \in \mathbb{N}$ the estimate
$$
\lambda_k(\Omega) \, \geq \, C_d \, \lk \frac{12}{\pi} \rk^{1/d} \frac{d}{(d+3)^{1+1/d}} \lk \frac{\Gamma \lk (d+3)/2 \rk}{\Gamma(d/2+1)} \rk^{2/d} \, \frac{k^{2/d}}{|\Omega|^{2/d}}  + \frac{1}{l_0^2}
$$
holds, with
$$
C_d \, = \, 4  \pi \, \Gamma \lk d/2+1 \rk^{2/d} \, .
$$
\end{corollary}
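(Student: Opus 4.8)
The plan is to specialise the Melas-type bound (\ref{in:simple}) to the endpoint $\sigma=3/2$ and then extract a lower bound on $\lambda_k$ by the Legendre-transform device of \cite{Laptev97}: one tests the trace inequality against a single, well-chosen value of the spectral parameter $\La$.

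First I would keep only the first $k$ summands on the left of (\ref{in:simple}). For $\La\geq\lambda_k$ each of these equals $(\La-\lambda_j)^{3/2}\geq(\La-\lambda_k)^{3/2}$, so taking $\sigma=3/2$ in (\ref{in:simple}) yields the scalar inequality
$$
k\,(\La-\lambda_k)^{3/2}\,\leq\, L^{cl}_{3/2,d}\,|\Omega|\,(\La-1/l_0^2)_+^{(d+3)/2}\,,\qquad \La\geq\lambda_k\,.
$$
By Corollary \ref{cor:gdstate} we have $\lambda_k\geq\lambda_1\geq\pi^2/l_0^2>1/l_0^2$, so that $\nu_k:=\lambda_k-1/l_0^2>0$ (when $l_0=\infty$ this positivity is trivial, and in any case it also follows directly from the displayed inequality, whose right-hand side vanishes for $\La\le1/l_0^2$ while the left-hand side is positive for $\La$ slightly above $\lambda_k$). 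Setting $\mu=\La-1/l_0^2$, the inequality reads
$$
k\,(\mu-\nu_k)^{3/2}\,\leq\, L^{cl}_{3/2,d}\,|\Omega|\,\mu^{(d+3)/2}\,,\qquad \mu>\nu_k\,.
$$

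The key step is then to optimise in $\mu$. The ratio $(\mu-\nu_k)^{3/2}\mu^{-(d+3)/2}$ is maximal at $\mu=\tfrac{d+3}{d}\,\nu_k$, and this value satisfies $\mu\geq\nu_k$, so the corresponding $\La=1/l_0^2+\mu$ obeys $\La\geq\lambda_k$ and is admissible. Substituting it and cancelling the common power of $\nu_k$ gives
$$
\nu_k^{\,d/2}\,\geq\,\frac{k}{L^{cl}_{3/2,d}\,|\Omega|}\,\Big(\frac{3}{d}\Big)^{3/2}\Big(\frac{d}{d+3}\Big)^{(d+3)/2}\,.
$$
It then remains to insert $L^{cl}_{3/2,d}=\Gamma(5/2)\big/\big((4\pi)^{d/2}\Gamma((d+5)/2)\big)$, simplify using $\Gamma(5/2)=\tfrac34\sqrt\pi$ and $\Gamma((d+5)/2)=\tfrac{d+3}{2}\Gamma((d+3)/2)$, raise both sides to the power $2/d$, and rearrange; with $C_d=4\pi\,\Gamma(d/2+1)^{2/d}$ one checks the identities $(2\sqrt3/\sqrt\pi)^{2/d}=(12/\pi)^{1/d}$ and $4\pi\,\Gamma(d/2+1)^{2/d}\big(\Gamma((d+3)/2)/\Gamma(d/2+1)\big)^{2/d}=4\pi\,\Gamma((d+3)/2)^{2/d}$, which turn the bound for $\nu_k$ into the asserted bound for $\lambda_k=1/l_0^2+\nu_k$.

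I expect the only genuine obstacle to be this last, purely computational step — the bookkeeping of gamma functions and powers of $d$ — together with the (standard but essential) point that the sharp constant emerges precisely for the choice $\La=1/l_0^2+\tfrac{d+3}{d}\,\nu_k$; all the rest is a short and direct manipulation of (\ref{in:simple}).
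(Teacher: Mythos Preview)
Your argument is correct and essentially the same as the paper's. The paper applies (\ref{in:numberest}) with $\sigma=3/2$, then (\ref{in:simple}), and minimises over the auxiliary parameter $\tau$ to obtain an upper bound on $N(\La)$, which it then inverts; you instead keep $k$ fixed, bound $k(\La-\lambda_k)^{3/2}$ by the right-hand side of (\ref{in:simple}), and maximise over $\La$. These are the same optimisation in different variables (the paper's choice $\tau_{\min}=3(\La l_0^2-1)/(d\La l_0^2)$ corresponds exactly to your $\mu=\tfrac{d+3}{d}\nu_k$), and both lead to the identical bound $\nu_k^{d/2}\geq \dfrac{3^{3/2}d^{d/2}}{(d+3)^{(d+3)/2}}\,\dfrac{k}{L^{cl}_{3/2,d}|\Omega|}$.
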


\begin{proof}
Let $N(\La) = \tr \lk -\Delta - \La \rk_-^0$ denote the counting function of the eigenvalues below $\La > 0$. In \cite{Laptev97} it is shown that for $\sigma > 0$, and all $\La >0$, $\tau >0$
\begin{equation}
\label{in:numberest}
N(\La) \, \leq \, \lk \tau  \La \rk^{-\sigma} \, \tr \lk -\Delta - (1 + \tau) \La \rk_-^\sigma  \, . 
\end{equation}
If we apply this inequality with $\sigma = 3/2$, we can use (\ref{in:simple}) to estimate
$$
N(\La) \, \leq \, L^{cl}_{3/2,d} \, |\Omega| \, \La^{d/2} \, \frac{(1+\tau)^{(d+3)/2}}{\tau^{3/2}} \lk 1 - \frac{1}{\La (1+\tau) l_0^2} \rk_+^{(d+3)/2} \, .
$$
Minimising the right hand side in $\tau > 0$ yields $\tau_{\min} = 3(\La l^2_0-1)/(d \La l_0^2 ) $ and inserting this we find
$$
N(\La) \, \leq \, L^{cl}_{3/2,d} \, |\Omega| \, \frac{(d+3)^{(d+3)/2}}{3^{3/2} \,  d^{d/2}} \lk \La - \frac{1}{l_0^2} \rk^{d/2}_+ \, .
$$
This is equivalent to the claimed result.
\end{proof}

\begin{remark}
Applying the same method to (\ref{in:beliyau}) with $\sigma = 1$ we recover the Li-Yau inequality (\ref{in:liyau}).
In the proof of Corollary \ref{cor:Nsimple} we have to start from $\sigma = 3/2$, therefore the result is not strong enough to improve (\ref{in:liyau}) in general, but one gets improvements for low eigenvalues whenever $l_0$ is small.
\end{remark}

%%%%%%%%%%%%%%%%%%%%%%%%%%%%%%%%%%%%%%%%%%%%%%%%%%%%%%%%%%%%%%%%%%%%%%%%%%%%%%%%

\section{Further improvements in dimension 2}
\label{sec:2d}

In this section we further improve Corollary \ref{cor:convex} and generalise it to a large class of bounded convex domains $\Omega \subset \R^2$. Here we do not require smoothness, therefore we cannot use (\ref{in:steiner}) to estimate inner parallels of the boundary. To find a suitable substitute let $w$ denote the minimal width of $\Omega$ and note that for $l_0$ given in (\ref{eq:lineseg}) the identity
$$
w \, = \, l_0
$$
holds true, see e.g. \cite{BonFen48}. In the remainder of this section we assume that for all $t > 0$
\begin{equation}
\label{eq:innerwidth}
|\partial \Omega_t| \, \geq \, \lk 1 - \frac{3t}{w} \rk_+ |\partial \Omega| \, .
\end{equation}
This is true for a large class of convex domains, including the circle, regular polygons and arbitrary triangles. Actually we conjecture that (\ref{eq:innerwidth}) holds true for all bounded convex domains in $\R^2$.

Furthermore we need a lower bound on the ground state. From Corollary \ref{cor:gdstate} we obtain that for all convex domains $\Omega \subset \R^2$
\begin{equation}
\label{eq:gdstate}
\lambda_1 (\Omega) \, \geq \, \frac{\pi^2}{w^2} 
\end{equation}
holds. One should mention, that the same estimate can be obtained from the inequality $\lambda_1 (\Omega) \geq \pi^2 / (4 r_{in}^2)$, see \cite{osserm77}, where $r_{in}$ is the inradius of $\Omega$.

Using similar but more precise methods as in the proof of Theorem \ref{thm:gen} we get

\begin{theorem}
\label{thm:convex}
Let $\Omega \subset \R^2$ be a bounded, convex domain, satisfying (\ref{eq:innerwidth}).
Then for $\sigma \geq 3/2$ we have 
\begin{align*}
\tr \lk -\Delta - \La \rk_-^\sigma \, &= \, 0 &\textnormal{if} \ & \La \leq \pi^2/w^2 \ \textnormal{and} \\
\tr \lk -\Delta - \La \rk_-^\sigma \, &\leq \, L^{cl}_{\sigma,2} \, |\Omega| \, \La^{\sigma+1} - C_{co} \, L^{cl}_{\sigma,1} \, |\partial \Omega| \, \La^{\sigma+1/2} & \textnormal{if} \ & \La > \pi^2/w^2 \ ,
\end{align*}
with a constant
$$
C_{co} \, \geq \, \frac{11}{9 \pi^2} - \frac{3}{20 \pi^4} - \frac{2}{5\pi^2} \ln \lk \frac{4 \pi}{3} \rk \, > \, 0.0642 \, .
$$
\end{theorem}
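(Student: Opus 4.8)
Since for a convex domain $l_0$ coincides with the minimal width $w$ (\cite{BonFen48}), Corollary~\ref{cor:gdstate} gives $\lambda_1(\Omega)\ge\pi^2/l_0^2=\pi^2/w^2$, so for $\La\le\pi^2/w^2$ every $(\La-\lambda_k)_+^\sigma$ vanishes and $\tr(-\Delta-\La)_-^\sigma=0$; this settles the first case. For $\La>\pi^2/w^2$ the plan is to prove the bound for $\sigma=3/2$ first and then lift it. Averaging the estimate of Theorem~\ref{thm:general} over $u\in\mathbb{S}^1$, as in the proof of Theorem~\ref{thm:gen}, gives (\ref{eq:hd_basic}) with $d=2$, $\sigma=3/2$:
\[
\tr(-\Delta-\La)_-^{3/2}\,\le\,L^{cl}_{3/2,2}\,\La^{5/2}\int_\Omega\int_{\mathbb{S}^1}\lk1-\frac{1}{4\La\,d(x,u)^2}\rk_+^{5/2}d\nu(u)\,dx\,.
\]
The idea is to estimate the inner integral pointwise using convexity. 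Fix $x\in\Omega$, let $a\in\partial\Omega$ realise $\delta(x)=|x-a|=\mathrm{dist}(x,\partial\Omega)$ (legitimate since $\Omega(x)=\Omega$), and note that $\Omega$ lies in the half-plane cut off by the supporting line to $\Omega$ at $a$, orthogonal to $x-a$. Hence a ray $\{x+tu:t>0\}$ making angle $\phi$ with $(a-x)/|a-x|$ leaves $\Omega$ at some $t\le\delta(x)/\cos\phi$ when $|\phi|<\pi/2$; using $d(x,u)=d(x,-u)=\min\{\theta(x,u),\theta(x,-u)\}$ we get $d(x,u)\le\delta(x)/|\cos\phi|$ for every $u$, so, parametrising $\mathbb{S}^1$ by this angle,
\[
\int_{\mathbb{S}^1}\lk1-\frac{1}{4\La\,d(x,u)^2}\rk_+^{5/2}d\nu(u)\,\le\,\frac{1}{2\pi}\int_0^{2\pi}\lk1-\frac{\cos^2\phi}{4\La\,\delta(x)^2}\rk_+^{5/2}d\phi\,=:\,h\big(\sqrt{\La}\,\delta(x)\big)\,,
\]
where $h$ is an explicit nondecreasing function on $[0,\infty)$ with $0\le h\le1$ and $h(r)\to1$ as $r\to\infty$. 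Thus $\int_\Omega\int_{\mathbb{S}^1}(\cdot)\le|\Omega|-\int_\Omega\big(1-h(\sqrt{\La}\,\delta(x))\big)\,dx$.

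To turn the deficit into a boundary term write $\Omega_t=\{x\in\Omega:\delta(x)>t\}$; since $1-h\ge0$, the coarea formula and (\ref{eq:innerwidth}) give
\[
\int_\Omega\big(1-h(\sqrt\La\,\delta(x))\big)\,dx\,=\,\int_0^{r_{in}}\big(1-h(\sqrt\La\,t)\big)\,|\partial\Omega_t|\,dt\,\ge\,|\partial\Omega|\int_0^{r_{in}}\big(1-h(\sqrt\La\,t)\big)\lk1-\frac{3t}{w}\rk_+dt\,.
\]
Substituting $t=s/(4\sqrt\La)$ and invoking $\La>\pi^2/w^2$ — which both replaces the width factor by a universal one (since then $w\sqrt\La>\pi$, giving $(1-3t/w)_+\ge(1-3s/(4\pi))_+$) and places the support $s\le4\pi/3$ inside the domain — bounds the right-hand side below by $|\partial\Omega|\,A/(4\sqrt\La)$, with $A=\int_0^{4\pi/3}\big(1-h(s/4)\big)(1-3s/(4\pi))\,ds$ a positive absolute constant. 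Defining $C_{co}$ by $\tfrac14L^{cl}_{3/2,2}A=C_{co}L^{cl}_{3/2,1}$ (so $C_{co}=2A/(15\pi)$) yields
\[
\tr(-\Delta-\La)_-^{3/2}\,\le\,L^{cl}_{3/2,2}\,|\Omega|\,\La^{5/2}\,-\,C_{co}\,L^{cl}_{3/2,1}\,|\partial\Omega|\,\La^{2}\,,\qquad\La>\pi^2/w^2\,,
\]
and the stated numerical lower bound for $C_{co}$ comes from estimating $A$ from below — e.g. replacing $(1-y)_+^{5/2}$ by $(1-y)_+$ (valid as $0\le(1-y)_+\le1$) and carrying out the resulting elementary integral, the piece of the form $\int s^{-2}(1-cs)\,ds$ producing the logarithmic term $\ln(4\pi/3)$.

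For $\sigma>3/2$ one lifts the $\sigma=3/2$ estimate by the Aizenman--Lieb method (\cite{AizLie78}), exactly as in the passage to (\ref{eq:hd_ball}): write $\tr(-\Delta-\La)_-^\sigma=B(\sigma-\tfrac32,\tfrac52)^{-1}\int_0^\infty\tau^{\sigma-5/2}\,\tr(-\Delta-(\La-\tau))_-^{3/2}\,d\tau$, note that the integrand vanishes once $\La-\tau\le\pi^2/w^2$ (by the vanishing case), insert the $\sigma=3/2$ bound, substitute $\mu=\La-\tau$, and evaluate the Beta integrals; the identities $L^{cl}_{3/2,2}\,B(\sigma-\tfrac32,\tfrac72)/B(\sigma-\tfrac32,\tfrac52)=L^{cl}_{\sigma,2}$ and $L^{cl}_{3/2,1}\,B(\sigma-\tfrac32,3)/B(\sigma-\tfrac32,\tfrac52)=L^{cl}_{\sigma,1}$ convert the estimate into the asserted form with the same constant $C_{co}$ (the truncation of the $\mu$-integral at $\pi^2/w^2$ contributing only lower-order terms one checks are harmless).

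The main difficulty is the interplay between the geometric reduction and the constant bookkeeping: every intermediate inequality — the replacement of $(1-y)_+^{5/2}$, the coefficient $3$ in (\ref{eq:innerwidth}), and the normalisation forced by $\La>\pi^2/w^2$ — must be tight enough for $A$, hence $C_{co}$, to stay strictly positive, which is why the somewhat baroque value $\tfrac{11}{9\pi^2}-\tfrac{3}{20\pi^4}-\tfrac{2}{5\pi^2}\ln(4\pi/3)$ appears rather than something cleaner. Two subsidiary points also need care: justifying the coarea identity for the inner parallel sets of a merely convex (possibly non-smooth) domain, and establishing the pointwise convexity estimate $d(x,u)\le\delta(x)/|\cos\phi|$ together with the reduction to the minimum over $\pm u$.
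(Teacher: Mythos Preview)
Your plan is essentially the paper's: derive the $\Lambda\le\pi^2/w^2$ case from $\lambda_1\ge\pi^2/w^2$, for $\sigma=3/2$ average (\ref{eq:thm:general}) over $u$, use convexity to get $d(x,u)\le\delta(x)/\cos\phi$, pass to the distance function via the coarea formula and (\ref{eq:innerwidth}), exploit $\La>\pi^2/w^2$ to make the weight universal, and then lift to $\sigma>3/2$ by Aizenman--Lieb. The paper does exactly this, only it replaces $(1-y)_+^{5/2}$ by $(1-y)^2$ at the outset and then computes the $\theta$-integral in closed form, splitting the $x$-integral into the regions $\delta(x)\ge 1/(2\sqrt\La)$ and $\delta(x)<1/(2\sqrt\La)$ (the two pieces $I_1,I_2$).

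There is, however, one concrete problem with your numerical step. Replacing $(1-y)_+^{5/2}$ by the \emph{linear} majorant $(1-y)_+$ is too crude to recover the stated lower bound for $C_{co}$. With $y=\cos^2\phi/(4r^2)$ and the average over $\phi$, the linear bound gives $1-h(r)\ge\langle y\rangle=1/(8r^2)$ in the range $r\ge 1/2$, whereas the paper's quadratic bound $(1-y)^{5/2}\le(1-y)^2$ gives $1-h(r)\ge 2\langle y\rangle-\langle y^2\rangle=1/(4r^2)-3/(128r^4)$, essentially doubling the leading contribution. It is precisely this factor of two (from the $-2y$ in the square) that produces the coefficients $\tfrac{11}{9\pi^2}$ and $\tfrac{2}{5\pi^2}$ in front of the logarithm, while the quartic piece $\langle y^2\rangle$ is responsible for the small correction $-\tfrac{3}{20\pi^4}$. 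With your linear bound one only obtains roughly half of the stated $C_{co}$; to land on $\tfrac{11}{9\pi^2}-\tfrac{3}{20\pi^4}-\tfrac{2}{5\pi^2}\ln(4\pi/3)$ you must use $(1-y)^2$ and carry both the $\cos^2$ and $\cos^4$ integrals through the two regimes, as the paper does.

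A smaller remark: your parenthetical about the Aizenman--Lieb truncation (``lower-order terms one checks are harmless'') is not an argument --- the tail $\mu\in(0,\pi^2/w^2]$ enters with the wrong sign in the boundary term, so one actually has to verify that $\int_0^{\pi^2/w^2}(\Lambda-\mu)^{\sigma-5/2}\big(L^{cl}_{3/2,2}|\Omega|\mu^{5/2}-C_{co}L^{cl}_{3/2,1}|\partial\Omega|\mu^2\big)\,d\mu\ge 0$. The paper is equally terse here; either way this step deserves a line of justification rather than a wave of the hand.
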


\begin{proof}
The first claim follows directly from (\ref{eq:gdstate}), thus we can assume $\La > \pi^2/w^2$.
First we prove the result for $\sigma = 3/2$.
Again we can start from (\ref{eq:hd_basic}) and we need to estimate $d(x,u)$ in terms of $\delta(x)$, which is just the distance to the boundary, since $\Omega$ is convex.

Fix $x \in \Omega$. Since $\Omega$ is convex and smooth we can choose $u_0 \in \mathbb{S}^{d-1}$, such that $d(x,u_0) = \delta(x)$. We can assume $u_0 = (1,0,\dots,0)$ and put $\mathbb{S}^{d-1}_+ = \left\{u \in \R^d \, : \, u_1 > 0 \right\}$.

Let $a$ be the intersection point of the semi-axes $\left\{ x+tu_0 , t > 0 \right\}$ with $\partial \Omega$ and for arbitrary $u \in \mathbb{S}^{d-1}_+$ let $b_u$ be the intersection point of $\left\{x+tu,t>0\right\}$ with the plane through $a$, orthogonal to $u_0$. We find $d(x,u) \leq |x-b_u|$ and if $\theta_u$ denotes the angle between $u_0$ and $u$, we find
$$
d(x,u) \, \leq \, |x-b_u| \, = \, \frac{|x-a|}{\cos \theta_u} \, = \, \frac{\delta(x)}{\cos \theta_u} \, .
$$
Using (\ref{eq:hd_basic}) and taking into account that $d(x,u) = d(x,-u)$ we can estimate
\begin{eqnarray}
\nonumber
\tr \lk -\Delta - \La \rk_-^{3/2} & \leq &  L^{cl}_{3/2,2} \, \La^{5/2} \int_\Omega \frac{2}{\pi} \int_{\theta_0}^{\pi/2} \lk 1 - \frac{\cos^2 (\theta)}{4 \La \delta(x)^2} \rk^{5/2} d\theta \, dx\\
\label{in:2dint}
& \leq & \frac{\La^{5/2}}{5 \pi^2} \int_\Omega \int_{\theta_0}^{\pi/2} \lk 1 - \frac{\cos^2 (\theta)}{2 \La \delta(x)^2} + \frac{\cos^4 (\theta)}{16 \La^2 \delta(x)^4} \rk d\theta   \, dx  ,
\end{eqnarray}
where $\theta_0 = 0$ if $\delta(x) \geq 1/(2 \sqrt \La)$ and $\theta_0 = \arccos (2\delta(x)\sqrt \La)$ if $\delta(x) < 1/(2 \sqrt \La)$. We set $u_0 = \min(1,2 \delta(x) \sqrt \La)$ and calculate 
\begin{eqnarray*}
\lefteqn{ \int_{\theta_0}^{\pi/2} \lk 1 - \frac{\cos^2 (\theta)}{2 \La \delta(x)^2} + \frac{\cos^4 (\theta)}{16 \La^2 \delta(x)^4} \rk d\theta } \\
& = & \frac \pi2 - \theta_0 - \frac{\arcsin(u_0) - u_0 \sqrt{1-u_0^2} }{4 \La \delta(x)^2} + \frac{3 \arcsin(u_0) - u_0 \lk 2 u_0^2+3 \rk \sqrt{1-u_0^2}}{128 \La^2 \delta(x)^4}  \, .
\end{eqnarray*}
Inserting this back into (\ref{in:2dint}) yields
\begin{equation}
\label{in:2dbasic}
\tr \lk -\Delta - \La \rk_-^{3/2} \, \leq \, \frac{1}{10 \pi} \, |\Omega| \, \La^{5/2} - \La^{5/2} \lk I_1 + I_2 \rk \, ,
\end{equation}
where
$$
I_1 \, = \, \int_{ \{ \delta(x) \geq 1/(2\sqrt\La) \} } \frac{1}{10\pi} \lk \frac{1}{4 \La \delta(x)^2} - \frac{3}{128\La^2\delta(x)^4} \rk dx
$$
and
\begin{eqnarray*}
I_2 & = & \int_{ \{ \delta(x) < 1/(2\sqrt\La) \} } \frac{1}{5\pi^2} \lk \arccos(2 \sqrt \La \delta(x)) + \frac{\arcsin(2\sqrt \La \delta(x))}{4\La\delta(x)^2} - \frac{ \sqrt{1-4 \La \delta(x)^2} }{2\sqrt\La \delta(x)}   \right. \\
&& \left. -\frac{3 \arcsin(2\sqrt\La \delta(x))}{128 \La^2 \delta(x)^4} + \frac{(8 \La \delta(x)^2+3) \sqrt{1-4 \La \delta(x)^2}}{64 \La^{3/2} \delta(x)^3} \rk dx \, .
\end{eqnarray*}

First we turn to 
$$
I_1 \, = \, \frac{1}{10 \pi} \int_{1/(2\sqrt \La)}^\infty \left| \partial \Omega_t \right| \lk \frac{1}{4 \La t^2} - \frac{3}{128\La^2t^4} \rk dt \, .
$$
Note that the term in brackets is positive, thus after substituting $s= 2 \sqrt \La t$ we can use (\ref{eq:innerwidth}) and $\La > \pi^2/w^2$ to obtain
\begin{eqnarray*}
I_1 & \geq & \frac{1}{20 \pi } \frac{|\partial \Omega|}{\sqrt \La} \int_1^\infty \lk 1 - \frac{3s}{2 \pi} \rk_+ \lk \frac{1}{s^2} - \frac{3}{8s^4} \rk ds\\
& = &  \frac{1}{20 \pi } \frac{|\partial \Omega|}{\sqrt \La} \lk \frac 78 - \frac{39}{32 \pi} - \frac{27}{128 \pi^3} - \frac{3}{2\pi} \ln \lk \frac{2\pi}{3} \rk \rk \, .
\end{eqnarray*}

Similarly we can treat $I_2$ and get
$$
I_2 \, \geq \, \frac{1}{10 \pi^2} \frac{|\partial \Omega|}{\sqrt \La} \lk \frac{557}{192} - \frac{7 \pi}{16} - \frac{3}{2 \pi} \int_0^1 \frac{\arcsin(s) }{s} ds \rk \, .
$$
In view of $L^{cl}_{3/2,1} = 3/16$ we can write
$$
I_1 + I_2 \, \geq \, L^{cl}_{3/2,1} \frac{|\partial \Omega|}{\sqrt \La} \lk \frac{11}{9 \pi^2} - \frac{3}{20 \pi^4} - \frac{2}{5\pi^2} \ln \lk \frac{2 \pi}{3} \rk - \frac{2}{5 \pi^2} \ln(2) \rk
$$
and inserting this into (\ref{in:2dbasic}) yields the claim in the case $\sigma = 3/2$.

To prove the estimate for $\sigma > 3/2$ we again refer to \cite{AizLie78} and use the identity
$$
\tr \lk -\Delta - \La \rk_-^\sigma \, = \, \frac{1}{B(\sigma-3/2,5/2)} \int_0^\infty \tau^{\sigma-5/2} \, \tr \lk -\Delta - (\La-\tau) \rk_-^{3/2} \, d\tau \, ,
$$
from which the general result follows.
\end{proof}

Now we can apply the same arguments that lead to Corollary \ref{cor:Nsimple} to derive lower bounds on individual eigenvalues.

\begin{corollary}
\label{cor:Nconvex}
Let $\Omega \subset \R^2$ be a bounded, convex domain, satisfying (\ref{eq:innerwidth}). Then for $k \in \mathbb{N}$ and any $\alpha \in (0,1)$ the estimate
$$
\frac{\lambda_k(\Omega)}{1-\alpha} \geq 10 \pi  \alpha^{3/2} \frac{k}{|\Omega|} + \frac{15 \pi C_{co}}{8} \frac{|\partial \Omega|}{|\Omega|} \sqrt{  10 \pi  \alpha^{3/2} \frac{k}{|\Omega|} + \frac{225 \pi^2  C_{co}^2}{256} \frac{|\partial \Omega|^2}{|\Omega|^2}  } + \frac{225 \pi^2  C_{co}^2}{128} \frac{|\partial \Omega|^2}{|\Omega|^2}   
$$
holds, with the constant $C_{co}$ given in Theorem \ref{thm:convex}.
\end{corollary}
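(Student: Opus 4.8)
The plan is to mirror the argument of Corollary~\ref{cor:Nsimple}, now starting from the sharper two-dimensional bound of Theorem~\ref{thm:convex} instead of (\ref{in:simple}), and to replace the explicit minimisation in $\tau$ by a parametrisation that keeps the free parameter $\alpha\in(0,1)$ in the final statement.

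First I would specialise Theorem~\ref{thm:convex} to $\sigma=3/2$ and insert $L^{cl}_{3/2,2}=1/(10\pi)$ and $L^{cl}_{3/2,1}=3/16$, so that for $\La>\pi^2/w^2$
$$
\tr\lk-\Delta-\La\rk_-^{3/2}\,\leq\,\frac{|\Omega|}{10\pi}\,\La^{5/2}-\frac{3C_{co}}{16}\,|\partial\Omega|\,\La^{2}\,.
$$
Since $\lambda_k\geq\lambda_1\geq\pi^2/w^2$ by (\ref{eq:gdstate}), it suffices to work with $\La\geq\pi^2/w^2$. Applying the Laptev bound (\ref{in:numberest}) with $\sigma=3/2$ at the shifted parameter $(1+\tau)\La$ (which still exceeds $\pi^2/w^2$) then gives
$$
N(\La)\,\leq\,\frac{(1+\tau)^{5/2}}{\tau^{3/2}}\,\frac{|\Omega|}{10\pi}\,\La-\frac{(1+\tau)^{2}}{\tau^{3/2}}\,\frac{3C_{co}}{16}\,|\partial\Omega|\,\La^{1/2}\,.
$$

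Next I would substitute $\alpha=\tau/(1+\tau)\in(0,1)$, under which $(1+\tau)^{5/2}\tau^{-3/2}=\bigl(\alpha^{3/2}(1-\alpha)\bigr)^{-1}$ and $(1+\tau)^{2}\tau^{-3/2}=\bigl(\alpha^{3/2}\sqrt{1-\alpha}\bigr)^{-1}$, and evaluate at $\La=\lambda_k$ using $N(\lambda_k)\geq k$ (exactly as in Corollary~\ref{cor:Nsimple}: the counting-function bound holds for every $\La>\lambda_k$ and the right-hand side is continuous, so one may let $\La\downarrow\lambda_k$). Multiplying through by $10\pi\,\alpha^{3/2}(1-\alpha)/|\Omega|$ turns this into a quadratic inequality for $X=\sqrt{\lambda_k}$,
$$
X^{2}-2\beta\sqrt{1-\alpha}\,X\,\geq\,\frac{10\pi\,\alpha^{3/2}(1-\alpha)}{|\Omega|}\,k\,,\qquad \beta:=\frac{15\pi C_{co}}{16}\,\frac{|\partial\Omega|}{|\Omega|}\,.
$$

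Finally I would complete the square, $X^{2}-2\beta\sqrt{1-\alpha}\,X=\bigl(X-\beta\sqrt{1-\alpha}\bigr)^{2}-\beta^{2}(1-\alpha)$, take the positive square root to obtain $X\geq\sqrt{1-\alpha}\,\bigl(\beta+\sqrt{10\pi\alpha^{3/2}k/|\Omega|+\beta^{2}}\bigr)$, square again, divide by $1-\alpha$ and expand the bracket; rewriting $2\beta$, $\beta^{2}$ and $2\beta^{2}$ in terms of $|\partial\Omega|/|\Omega|$ then reproduces precisely the asserted inequality. The only point that is more than bookkeeping is the sign in the square-root step: one must know that $X-\beta\sqrt{1-\alpha}>0$ before taking the positive root. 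This follows from the quadratic inequality itself, whose right-hand side is strictly positive and hence forces $X\bigl(X-2\beta\sqrt{1-\alpha}\bigr)>0$, i.e. $X>2\beta\sqrt{1-\alpha}>\beta\sqrt{1-\alpha}$; in other words the (negative) boundary term never dominates the Weyl term once $\La\geq\lambda_1$.
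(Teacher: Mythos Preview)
Your proof is correct and follows exactly the same route as the paper: apply (\ref{in:numberest}) with $\sigma=3/2$, insert the bound of Theorem~\ref{thm:convex}, and substitute $\tau=\alpha/(1-\alpha)$. The paper merely states that this substitution ``is equivalent to the claimed estimate,'' whereas you have spelled out the completion of the square and the sign check for $X-\beta\sqrt{1-\alpha}$ that justify this equivalence.
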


\begin{proof}
Applying (\ref{in:numberest}) and Theorem \ref{thm:convex} with $\sigma = 3/2$ yields
$$
N(\La) \, \leq \, L^{cl}_{3/2,2} \, |\Omega| \, \La \, \frac{(1+\tau)^{5/2}}{\tau^{3/2}} - C_{co} \, L^{cl}_{3/2,1} \, |\partial \Omega| \, \sqrt \La \, \frac{(1+\tau)^2}{\tau^{3/2}}
$$
for any $\tau > 0$ and $\La > \pi^2/w^2$. With $\tau = \alpha/(1-\alpha)$, $\alpha \in (0,1)$, this is equivalent to the claimed estimate.
\end{proof}

\begin{remark}
Given a fixed ratio $|\partial \Omega| / |\Omega|$ one can optimise the foregoing estimate in $\alpha \in (0,1)$, depending on $k \in \mathbb{N}$. As mentioned in the remark after Corollary \ref{cor:Nsimple}, the result cannot improve the Li-Yau inequality (\ref{in:liyau}) in general, since we have to apply (\ref{in:numberest}) with $\sigma = 3/2$ instead of $\sigma = 1$. 
However, the estimates obtained from Corollary \ref{cor:Nconvex} are stronger than (\ref{in:liyau}) for low eigenvalues and the improvements depend on the ratio $|\partial \Omega| / |\Omega|$.

In particular, one can use the isoperimetric inequality, namely that $|\partial \Omega| \geq 2 (\pi |\Omega|)^{1/2}$ for all $\Omega \subset \R^2$, to derive general improvements of the Li-Yau inequality (\ref{in:liyau}) for low eigenvalues. Indeed, from (\ref{in:liyau}) we get
$$
\lambda_k(\Omega) \, \geq \,  \frac{2\pi k}{|\Omega|} \, ,
$$
while optimising the estimate of Corollary \ref{cor:Nconvex} with $|\partial \Omega| = 2 (\pi |\Omega|)^{1/2}$, we find that for any convex domain, satisfying (\ref{eq:innerwidth}),
$$
\lambda_2 (\Omega) \, > \, \frac{15.03}{|\Omega|} \, > \, \frac{4 \pi}{|\Omega|} \ , \quad \lambda_3 (\Omega) \, > \, \frac{21.52}{|\Omega|} \, > \, \frac{6 \pi}{|\Omega|} \ , \quad \dots \ , \quad \lambda_{23}(\Omega) \, > \, \frac{144.58}{|\Omega|} \, > \, \frac{46 \pi}{|\Omega|} \, .
$$
In this way we can improve (\ref{in:liyau}) in convex domains for all eigenvalues $\lambda_k(\Omega)$ with $k \leq 23$.
\end{remark}

Finally let us make a remark about the square $Q_l = (0,l) \times (0,l) \subset \R^2$, $l > 0$. Using the methods introduced in section \ref{sec:1d} one can establish the following two-dimensional version of Lemma \ref{lem:1d}: Choose a coordinate system $(x_1,x_2) \in \R^2$ with axes parallel to the sides of the square and for $x \in Q_l$ put
$$
\delta(x_i) \, = \, \min (x_i, l - x_i) \, , \quad i = 1,2 \, .
$$
Then for $\sigma \geq 1$ and all $\Lambda > 0$ the estimate
\begin{eqnarray*} 
\tr \lk -\Delta - \La \rk_-^\sigma & = & \sum_{m,n \in \mathbb{N}} \lk \La - \frac{\pi^2}{l^2} \lk n^2 + m^2 \rk \rk_+^\sigma \\
& \leq & L^{cl}_{\sigma,2} \int_0^l \int_0^l \lk \La - C_{sq} \lk \frac{1}{\delta(x_1)} + \frac{1}{\delta(x_2)} \rk^2 \rk_+^{\sigma+1} \, dx_1 \, dx_2
\end{eqnarray*}
holds with a constant $C_{sq} > 1/10$.

%%%%%%%%%%%%%%%%%%%%%%%%%%%%%%%%%%%%%%%%%%%%%%%%%%%%%%%%%%%%%%%%%%%%%%%%%%%%%%%%%%%%%%%%%%%%%%%%%%%

\appendix

\section{Proof of Lemma \ref{lem:elementary} and Lemma \ref{lem:asympt}}

\subsection{Proof of Lemma \ref{lem:elementary}}

For $A \in \R$ let $\overline{A}$ and $\tilde{A}$ denote the integer and fractional part of $A$ respectively. Then we can calculate
\begin{eqnarray*}
\sum_k \lk 1-\frac{k^2}{A^2} \rk_+ & = & \sum_{k=1}^{\overline A} \lk 1- \frac{k^2}{A^2} \rk \, = \, \overline A - \frac{1}{A^2} \lk \frac{\overline A^3}{3} + \frac{\overline A^2}{2} + \frac{\overline A}{6} \rk \\
& = & \frac{2A}{3} - \frac 12 - \frac{1}{6 A} + \tilde A (1- \tilde A ) \frac 1A + \tilde A  \lk 1- 3 \tilde A + 2 \tilde A^2 \rk \frac{1}{6A^2} \, .
\end{eqnarray*}
From $0 \leq \tilde A < 1$ we conclude $\tilde A (1- \tilde A) \leq 1/4$ and $\tilde A \lk 1- 3 \tilde A + 2 \tilde A^2 \rk \leq \sqrt3/18$ and we get
\begin{equation}
\label{in:sum}
\sum_k \lk 1-\frac{k^2}{A^2} \rk_+ \, \leq \, \frac{2A}{3} - \frac 12 + \frac{1}{12 A} + \frac{\sqrt 3}{108A^2} \, .
\end{equation}

To estimate the right hand side of (\ref{in:elementary}) note that
$$
\int \lk 1- \frac{1}{s^2} \rk^{3/2} ds \, =  \, \lk 1 + \frac{1}{2s^2} \rk \sqrt{ s^2-1 } + \frac 32 \, \arctan \lk \frac{1}{\sqrt{s^2-1}} \rk \, , 
$$
thus
$$
\frac{2}{3\pi} \int_1^{\pi A} \lk 1-\frac{1}{s^2} \rk^{3/2} ds \, = \,  \frac{2 \pi^2 A^2 +1 }{3 \pi^2 A} \frac{\sqrt{\pi^2 A^2-1}}{\pi A}   + \frac{1}{\pi} \, \arctan \lk \frac{1}{\sqrt{\pi^2 A^2 -1}} \rk - \frac 12 \, .
$$
Now we can insert the elementary estimates
$$
\arctan \lk \frac{1}{\sqrt{\pi^2 A^2 -1}} \rk \, \geq \, \frac{1}{\pi A} \qquad \textnormal{and} \qquad 
\frac{\sqrt{\pi^2 A^2 -1 }}{\pi A} \, \geq \, 1 - \frac{1}{2 \pi^2 A^2} - \frac{a}{A^4}
$$
both valid for $A \geq 2$, where we write $a = 16 - 2/\pi^2 - 8\sqrt{4 \pi^2-1}/\pi$ for simplicity. We get
\begin{equation}
\label{in:integral}
\frac{2}{3\pi} \int_1^{\pi A} \lk 1-\frac{1}{s^2} \rk^{3/2} ds \, \geq \, \frac{2A}{3}-\frac 12 + \frac{1}{\pi^2 A} - \frac{1}{6\pi^4A^3} -\frac a3 \lk \frac{2}{A^3} + \frac{1}{\pi^2 A^5} \rk
\end{equation}
for all $A \geq 2$.
From (\ref{in:sum}) and (\ref{in:integral}) we deduce that (\ref{in:elementary}) holds true for all $A \geq 2$, since
$$
\lk \frac{1}{\pi^2} - \frac{1}{12} \rk A^4 - \frac{\sqrt 3}{108} A^3 - \lk \frac{1}{6 \pi^4} + \frac{2a}{3} \rk A^2 - \frac{a}{3 \pi^2} \, \geq \, 0
$$
for all $A \geq 2$. 

Note that (\ref{in:elementary}) is trivial for $1 / \pi \leq A \leq 1$, since the left hand side equals zero. The remaining case $1 \leq A \leq 2$ can be checked by hand.

\subsection{Proof of Lemma \ref{lem:asympt}}

We assume $I = (0,l)$, substitute $t = s \sqrt{c / \La}$ and write 
$$
\int_0^l \lk \La - \frac{c}{\delta(t)^2} \rk_+^{\sigma+1/2} dt \, = \, 2 \, \sqrt c \, \La^{\sigma} \int_1^{l\sqrt{\La}/(2\sqrt c)} \lk 1-\frac{1}{s^2} \rk^{\sigma+1/2} ds \, .
$$
The claim of the Lemma follows, if we show that 
$$
2 \sqrt c L^{cl}_{\sigma,1}   \int_1^{l\sqrt{\La}/(2\sqrt c)} \lk 1-\frac{1}{s^2} \rk^{\sigma+1/2} ds  - \sum_k \lk 1 - \frac{\pi^2 k^2}{\La \, l^2} \rk_+^\sigma  \, = \, \frac 12 - \sqrt c  + o \lk 1 \rk
$$
as $\La \to \infty$. With  $A = l \sqrt \La / \pi$ this is equivalent to
\begin{equation}
\label{in:lim}
2 \sqrt c \, L^{cl}_{\sigma,1}  \int_1^{\pi A /(2\sqrt{c})} \lk 1-\frac{1}{s^2} \rk^{\sigma+1/2} ds - \sum_k \lk 1- \frac{k^2}{A^2} \rk_+^\sigma  \, =  \,  \frac 12 - \sqrt c  + o \lk 1 \rk 
\end{equation}
as $A \to \infty$.

It is easy to see that 
\begin{equation*}
\sum_k \lk 1- \frac{k^2}{A^2}\rk_+^\sigma \, = \, \frac A2 B \lk \sigma+1,\frac 12 \rk - \frac 12 + o(1)
\end{equation*}
as $A \to \infty$. Moreover, we claim
\begin{equation}
\label{eq:limint}
\int_1^{\pi A /(2 \sqrt{c})} \lk 1 - \frac{1}{s^2} \rk^{\sigma+1/2} \, ds \, = \, \frac{\pi A}{2 \sqrt c} + \frac{1}{2} B\lk - \frac 12, \sigma + \frac 32 \rk + o(1)
\end{equation}
as $A \to \infty$ and (\ref{in:lim}) follows from $L^{cl}_{\sigma,1}  = B(\sigma+1,1/2)/(2 \pi)$, if we can establish (\ref{eq:limint}). Let us write
$$
\lk 1 - \frac{1}{s^2} \rk^m \, = \, \sum_{k\geq 0} (-1)^k  \binom{m}{k}  s^{-2k} \, = \, \sum_{k\geq0} \binom{k-m-1}{k} s^{-2k}
$$
for $m \geq 1$ and note that the sum is finite if $m \in \mathbb{N}$, while the sum converges uniformly on $s \in [1,\infty)$ if $m \notin \mathbb{N}$. Hence we have
$$
\int_1^y \lk 1- \frac{1}{s^2} \rk^m ds \, = \, y + \sum_{k\geq 0} \binom{k-m-1}{k}  \frac{1}{2k-1} + o(1)
$$
as $y \to \infty$. Using that
$$
\sum_{k\geq0} \binom{k-m-1}{k} \frac{1}{2k-1} \, = \,  \frac 12 B\lk - \frac 12, m+1 \rk 
$$
we obtain 
$$
\int_1^y \lk 1- \frac{1}{s^2}\rk^m ds \, = \, y + \frac 12 B \lk - \frac 12 , m+1 \rk + o(1)
$$
as $y \to \infty$, which is equivalent to (\ref{eq:limint}).
This finishes the proof of Lemma \ref{lem:asympt}.

\end{document}